\newcommand{\badrounding}[1]{}
\newtheorem{theo}{Theorem}
\newtheorem{prob}[theo]{Question}
\newtheorem{lemma}[theo]{Lemma}
\newtheorem{corl}[theo]{Corollary}
\newtheorem{conj}[theo]{Conjecture}
\theoremstyle{definition}
\numberwithin{theo}{section}
\tikzset{
vtx/.style={inner sep=1.1pt, outer sep=0pt, circle, fill,draw}
}
\title{Rainbow Subgraphs in Edge-colored Complete Graphs - \\ Answering two Questions by Erd\H{o}s and Tuza}
\begin{document}

\author{Maria Axenovich \footnote{\texttt{maria.aksenovich@kit.edu} Research is partially supported by DFG grant FKZ AX 93/2-1.}\qquad Felix Christian Clemen \footnote{\texttt{felix.clemen@kit.edu}}\\Karlsruhe Institute of Technology, 76133 Karlsruhe, Germany}

\maketitle


\abstract{An edge-coloring  of a  complete  graph  with a set of colors $C$ is called {\it completely balanced}   if any vertex is incident to the same number of edges of each color  from $C$. Erd\H{o}s and Tuza asked in $1993$ whether for any graph $F$  on $\ell$ edges and any completely balanced coloring  of any sufficiently  large complete graph using $\ell$ colors contains a rainbow copy of $F$. This question was restated by Erd\H{o}s in his list of  ``Some of my favourite problems on cycles and colourings''. We answer this question in the negative for most cliques $F=K_q$ by giving explicit constructions of respective completely balanced colorings. Further, we answer a related question concerning completely balanced  colorings of complete graphs with more colors than the number of edges in the graph $F$.
}

\section{Introduction}
Let $F$ and $G$ be graphs.
We say that an edge-coloring of $G$ contains a \emph{rainbow} $F$ if $G$ contains a subgraph isomorphic to $F$ such that all edges are assigned distinct colors. 
The existence of  a rainbow $F$  in a ground graph $G$ could be forced by simply using a lot of colors,  by requiring that each vertex of $G$ is incident to sufficiently many colors, or by making sure that each vertex of $G$ is not incident to too many edges of the same color.  These coloring conditions are referred to as anti-Ramsey or locally anti-Ramsey and it is  assumed that the number of colors used on the edges of $G$ is larger that the number of edges in $F$. 
The following list gives just a small sample of references for these and related problems: ~\cites{AJMP,KMSV,AJT,SS,MBNL,RT,APS}. 
Erd\H{o}s and Tuza~\cite{ErdTu} studied the existence of  a rainbow subgraph $F$  in  edge-colored complete graphs when the total number of  colors  is equal to  the number of edges of $F$. Here,  we focus on this problem.\\

Denote by $K_n$ the complete graph on $n$ vertices. An $(\ell,d)$-coloring of $K_n$ is an assignment of colors to edges such that in total $\ell$ colors are used and for every vertex there are at least $d$ edges incident to it, in every color. Let $F$ be a graph with $\ell$ edges. Define $d(n,F)=\infty$ if $K_n$ has an $(\ell,\lfloor (n-1)/\ell \rfloor)$ edge-coloring without a rainbow $F$; otherwise $d(n,F)$ is defined to be the smallest integer $d$ such that every $(\ell,d)$-coloring of $K_n$ contains a rainbow copy of $F$. \\

Erd\H{o}s and Tuza~\cite{ErdTu} determined $d(n,K_3)$ precisely and found an infinite class of graphs $F$  on $\ell$ edges, for which $d(n,F)=\infty$ for every positive $n\equiv 0 \mod \ell$. They~\cite{ErdTu} stated the following question on edge-colorings of the complete graph (Problem 1 in\cite{ErdTu}), also restated by Erd\H{o}s in his  list  of  ``Some of my favourite problems on cycles and colourings'',   \cite{Erdrain}.

\begin{prob}[Erd\H{o}s, Tuza~\cite{ErdTu}]
\label{Erdos1}
Is $d(n,F)$ finite for every graph $F$ on $\ell$ edges and every sufficiently large $n \equiv 1 \mod \ell$?
\end{prob}

If $n-1$ is divisible by $\ell$, we call an $(\ell, (n-1)/\ell)$-coloring of $K_n$ {\it completely  balanced}. Note that  for a graph $F$ on $\ell$ edges and $n-1$ divisible by $\ell$, $d(n,F)=\infty$ if and only if there is a completely balanced  coloring of $K_n$ using $\ell$ colors and containing no rainbow  $F$. We prove that 'most' cliques provide a negative answer to Question~\ref{Erdos1}.\\

Let $S(N)$ be the set of all natural $q$'s such that $4\leq q\leq N$ and for any  $n_0$,  there  is $n\geq n_0$, $n  \equiv 1 \mod \ell$ and a balanced coloring of $K_n$ in $\binom{q}{2}$ colors with no rainbow copy of $K_q$.  Question  \ref{Erdos1}  in case when $F$ is a clique  asks whether $S(N)= \emptyset$ for any natural $N$.  We show that actually not only $S(N)$ is non-empty, but also that it is close to  having size   $N$.

\begin{theo}
\label{mostcounter} $|S(N)| = N - (1+o(1))\frac{N}{\log N}.$
\end{theo}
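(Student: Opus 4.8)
The plan is to determine $S(N)$ exactly, up to a negligible set, by sorting the cliques according to whether $q$ is composite or prime and then invoking the prime number theorem. Concretely, I would establish: (i) every composite $q$ with $4\le q\le N$ lies in $S(N)$; and (ii) no prime lies in $S(N)$. Since every integer in $[4,N]$ is either prime or composite, these two facts give $S(N)=\{q\in[4,N]:q\text{ composite}\}$, of cardinality $(N-3)-(\pi(N)-2)=N-1-\pi(N)$. By the prime number theorem $\pi(N)=(1+o(1))\frac{N}{\log N}$, so $|S(N)|=N-(1+o(1))\frac{N}{\log N}$, as claimed. Any finitely many exceptions among small primes or prime powers are absorbed into the $(1+o(1))$ term, so it would in fact suffice to prove (ii) for all but $o(N/\log N)$ primes.

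For (i) the crux is to be \emph{rainbow-$K_q$-free} and \emph{completely balanced} simultaneously; each property in isolation is easy. Rainbow-freeness with the correct number of colors is achieved by the following toy coloring: partition $V(K_n)$ into $q-1$ classes, color the edges between classes $i$ and $j$ by a single color $c_{ij}$ (using $\binom{q-1}{2}$ colors), and color the edges inside the classes with a further palette of $q-1$ colors, for a total of $\binom{q-1}{2}+(q-1)=\binom{q}{2}$ colors. Every $q$-clique has two vertices $u,u'$ in a common class $p$ by pigeonhole; if the clique also meets another class $p'$ at a vertex $w$, then $uw$ and $u'w$ both receive the color $c_{pp'}$, while if the clique lies inside a single class it uses only the $q-1$ internal colors, which is fewer than $\binom{q}{2}$. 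Hence no copy is rainbow. This coloring is wildly unbalanced, and restoring \emph{complete} balance is the real task. Here I would use a factorization $q=ab$ with $a,b\ge2$, together with the identity
\[
\binom{q}{2}=a\binom{b}{2}+\binom{a}{2}b^{2}
\]
obtained by grouping the clique's vertices into $a$ blocks of size $b$, to split the palette, and I would build the coloring on a vertex-transitive (circulant) backbone over $\mathbb{Z}_n$, choosing $n$ large with $n\equiv1\bmod\binom{q}{2}$ and with the divisibility required by the block structure. Transitivity makes complete balance automatic and reduces the problem to partitioning the nonzero differences into $\binom{q}{2}$ symmetric classes of equal size whose block pattern forces every $q$-clique to repeat a color.

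For (ii) I would argue that complete balance is too rigid to avoid a rainbow $K_p$ when $p$ is prime. Arguing contrapositively, from a completely balanced $\binom{p}{2}$-coloring of a large $K_n$ with no rainbow $K_p$ I would extract structural information about the color classes — that they respect a nontrivial equivalence relation on the vertices, or that the forced color multiplicities share a nontrivial factor with $p$ — and show that such structure cannot exist when $p$ is prime. The point is that primality removes precisely the arithmetic slack, namely a nontrivial factor of $q$, on which the construction in (i) relies, so that the blocking mechanism available for composite $q$ has no counterpart.

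The main obstacle is part (i): producing colorings that are simultaneously rainbow-free and \emph{exactly} balanced. The two requirements pull against each other, since the local equidistribution demanded by balance tends to spread colors out and thereby create rainbow cliques — this is exactly the phenomenon that (ii) formalizes for primes — so the construction must use the factorization of $q$ in an essential way. I expect enforcing exact, rather than approximate, balance across the entire vertex set while preserving the global blocking property to be the most delicate point; once the right structural dichotomy for balanced rainbow-free colorings is identified, the forcing result (ii) should follow from it.
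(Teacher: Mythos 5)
Your plan rests on the dichotomy ``composite $q$ lies in $S(N)$, prime $q$ does not,'' and the second half of this dichotomy is provably false --- indeed it is contradicted by the paper itself. Theorem~\ref{maintheorem} shows that every $q\ge 10$ with $q\equiv 2$ or $3 \pmod 4$ lies in $S(N)$: for such $q$ the coloring \eqref{coloring} of $K_{\ell+1}$, $\ell=\binom{q}{2}$, is completely balanced with $\ell$ colors and has no rainbow $K_q$, and its lexicographic powers (Lemma~\ref{iteralexico}) give arbitrarily large $n=(\ell+1)^k\equiv 1\pmod{\ell}$. In particular the primes $11,19,23,31,\dots$ all lie in $S(N)$, so by Dirichlet at least half of all primes belong to $S(N)$; combining Lemma~\ref{2cons} with Peluse's theorem one even gets that all but $o(N/\log N)$ primes lie in $S(N)$ (for an odd prime $p$, a perfect difference set of size $p$ can exist only if $p-1=2^k$, which happens $O(\log N)$ times, or if $p$ lies in Peluse's $o(N/\log N)$-sized exceptional set). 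So the $\sim N/\log N$ non-members you need cannot be found among the primes, and your step (ii) --- the entire upper-bound half of your argument --- collapses. Your step (i) is also not a proof: the partition coloring is indeed rainbow-$K_q$-free, but you concede that making it \emph{completely} balanced is ``the real task'' and you only gesture at a circulant construction; proving that every composite $q\ge 4$ is in $S(N)$ is essentially Conjecture~\ref{conjecturecliques} restricted to composites and is open (nothing is known for $q=9$, say). That your final count $N-1-\pi(N)$ agrees numerically with the theorem is a coincidence: the primes and the set that actually controls membership happen to have the same counting function $(1+o(1))N/\log N$.

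The paper's mechanism is different: membership in $S(N)$ is tied to perfect difference sets, not to the primality of $q$. In Lemma~\ref{2cons} one takes $n=q^2-q+1=2\binom{q}{2}+1$, identifies the vertex set with $\mathbb{Z}_n$, and colors the edge $ab$ by $\pm(a-b)$; every color class is a $2$-factor, so this is a completely balanced $\binom{q}{2}$-coloring, and the vertex set of a rainbow $K_q$ would realize all $q^2-q=n-1$ nonzero differences exactly once, i.e.\ it would be a perfect difference set of size $q$ in $\mathbb{Z}_n$. Hence every $q$ for which no such difference set exists lies in $S(N)$ (again after taking lexicographic powers to get infinitely many admissible $n$), and by Peluse's asymptotic version of the Prime Power Conjecture there are only $(1+o(1))N/\log N$ values $q\le N$ for which one does exist. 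Note also that this argument yields only the lower bound $|S(N)|\ge N-(1+o(1))N/\log N$; the paper supplies no matching upper bound (Conjecture~\ref{conjecturecliques} in fact asserts that every $q\ge 4$ is a member), so an attempt like yours to certify $\sim N/\log N$ values as \emph{non}-members of $S(N)$ was never going to be the route to the stated result, and for primes congruent to $3$ modulo $4$ it is impossible.
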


For the proof of Theorem~\ref{mostcounter} we establish a connection between Question~\ref{Erdos1} for cliques and the Prime Power Conjecture  on perfect difference sets (Conjecture~\ref{PPC}).
We conjecture that in fact  when $F$ is  any  clique of size at least four,  the  answer to Question~\ref{Erdos1} is negative:
\begin{conj}
\label{conjecturecliques}
$S(N)= \{n\in \mathbb{N}: n\geq 4\}$. 
\end{conj}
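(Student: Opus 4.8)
The plan is to reduce membership in $S(N)$ to a single finite witness and then attack the arithmetic obstruction head-on. First I would observe that $q\in S(N)$ is equivalent to the existence of \emph{one} balanced coloring: if $K_{n_0}$ with $n_0\equiv 1\pmod{\binom q2}$ admits a balanced $\binom q2$-coloring $\chi_0$ with no rainbow $K_q$, then substituting a disjoint copy of $(K_{n_0},\chi_0)$ into each vertex, and coloring an edge between two copies by the $\chi_0$-color of the corresponding base edge, again yields a balanced coloring with no rainbow $K_q$. Indeed, a short degree count gives every color a degree of $(n_0^2-1)/\binom q2$ at each vertex, and any $q$ chosen vertices either lie in distinct copies, where they inherit a base clique and hence a repeated color, or two of them share a copy, in which case their two edges to any vertex of another copy receive the same color. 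Iterating this self-substitution produces balanced $\binom q2$-colorings of $K_{n_0^t}$ for all $t$, and since $n_0\equiv1$ forces $n_0^t\equiv1\pmod{\binom q2}$, one witness gives infinitely many admissible $n$. Thus Conjecture~\ref{conjecturecliques} reduces to: for every $q\ge 4$ there is at least one balanced $\binom q2$-coloring of some complete graph with no rainbow $K_q$.

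Next I would isolate exactly where Theorem~\ref{mostcounter} stops. Set $m=q^2-q+1=2\binom q2+1$ and color $K_m$ on vertex set $\mathbb Z_m$ by giving each difference pair $\{d,-d\}$ its own color; this uses $\binom q2$ colors and is balanced, with every color meeting every vertex exactly twice. Because $\mathbb Z_m$ has exactly $\binom q2$ nonzero $\pm$-pairs, a $q$-set is rainbow precisely when its $\binom q2$ difference pairs are all distinct, i.e.\ when it is a perfect difference set of order $q-1$. Hence this coloring is rainbow-$K_q$-free if and only if $\mathbb Z_m$ carries no such difference set, which by the Prime Power Conjecture (Conjecture~\ref{PPC}) happens exactly when $q-1$ is not a prime power; this is the family underlying $|S(N)|=N-(1+o(1))N/\log N$. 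The outstanding case is therefore $q-1$ a prime power, where a perfect difference set $D$ \emph{does} exist and each of its $m$ cyclic translates is a rainbow $K_q$.

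For the hard case I would abandon the tight regime $n_0=m$, in which using exactly $\binom q2$ colors forces the difference-pair coloring to be a bijection and hence makes the translates of $D$ rainbow. Instead I would work in $\mathbb Z_{n_0}$ for a larger $n_0\equiv1\pmod{\binom q2}$ (for instance a multiple of $m$), where there are $(n_0-1)/2>\binom q2$ difference pairs, and seek an \emph{equitable} partition of these pairs into $\binom q2$ color classes engineered so that every $q$-element Sidon set has two of its difference pairs in a common class. The leverage is that for $q-1$ a prime power the $q$-Sidon sets are governed by the geometry of $PG(2,q-1)$, so one may hope to enumerate or bound them and then choose the grouping by a covering argument; failing an explicit abelian structure, one would drop cyclicity altogether and plant a single forced color-coincidence inside every translate of $D$ while rebalancing the remaining colors.

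The main obstacle is precisely this last step: one must simultaneously (i) use \emph{exactly} $\binom q2$ colors, (ii) keep every color class exactly $(n_0-1)/\binom q2$-regular, and (iii) destroy the rainbow property of \emph{all} $q$-Sidon sets, whose number grows with $n_0$. Conditions (i)--(ii) leave essentially no freedom at $n_0=m$ and only mild freedom for larger $n_0$, whereas (iii) is a global constraint over all $\binom{n_0}q$ cliques that a random equitable coloring fails badly, since for fixed $q$ the expected number of rainbow $K_q$'s is $\Theta\!\left(\binom{n_0}q\right)$. What is missing is a design-theoretic object for prime-power $q$ playing the role that the \emph{non-existence} of perfect difference sets plays for the remaining $q$; producing such colorings explicitly, or proving their existence non-constructively subject to the exact balance and color-count constraints, is the crux of Conjecture~\ref{conjecturecliques}.
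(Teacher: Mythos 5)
Your write-up is a research program, not a proof: the statement is Conjecture~\ref{conjecturecliques}, which the paper itself leaves open, and by your own admission the decisive step is missing, so nothing here settles it. The ingredients you do verify are correct but coincide with the paper's machinery. Your vertex-substitution construction is exactly the lexicographical product coloring of Lemma~\ref{lexicoproduct}, and your ``one witness suffices'' reduction is Lemma~\ref{iteralexico}; your difference-pair coloring of $K_{q^2-q+1}$ is exactly the $2$-factorization coloring of Lemma~\ref{2cons}, whose rainbow $q$-sets are precisely the perfect difference sets of size $q$. One caveat on that part: you invoke the Prime Power Conjecture~\ref{PPC} to decide when this coloring is rainbow-$K_q$-free, but for the density statement the paper needs no conjecture at all --- Peluse's theorem \emph{unconditionally} shows only $(1+o(1))N/\log N$ values $q\leq N$ admit a perfect difference set in $\mathbb{Z}_{q^2-q+1}$, which combined with Lemma~\ref{2cons} yields Theorem~\ref{mostcounter}.

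The concrete flaw beyond the admitted gap is that you mislocate the open frontier. You claim the outstanding case is \emph{all} $q$ with $q-1$ a prime power, but the paper has a second, independent construction that you never consider and that handles many such $q$ unconditionally: when $\ell=\binom{q}{2}$ is odd, i.e.\ $q\equiv 2,3 \pmod 4$, the $1$-factorization coloring \eqref{coloring} of $K_{\ell+1}$ contains no rainbow $K_m$ with $m=\lfloor\sqrt{\ell}+\frac{7}{2}\rfloor$ (Lemma~\ref{K16israinbowk6-free}, proved via the weak $2$-Sidon bounds \eqref{weakSidon} and \eqref{Sidon}), and since $m\leq q$ for $q\geq 10$ this gives Theorem~\ref{maintheorem}; for instance $q=10,14,18$ have $q-1=9,13,17$ equal to prime powers yet lie in $S(N)$. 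So the genuinely open cases are only $q\equiv 0,1\pmod 4$ with $q-1$ a prime power, together with a few small values. For those, your proposed repair --- regrouping the $(n_0-1)/2$ difference pairs of a larger $\mathbb{Z}_{n_0}$ into $\binom{q}{2}$ equitable classes so that every $q$-element Sidon set has two difference pairs in one class --- is offered without any mechanism: the number of $q$-Sidon sets grows like $\Theta(n_0^q)$ while the regrouping freedom is comparatively tiny, a random choice fails (as you note), and you give no reason the $PG(2,q-1)$ structure makes a covering argument feasible, nor do you address that for these residues $\ell$ is even, so the paper's Sidon-set technique is also unavailable. As you concede, this is the crux, and it remains unproved.
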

In further partial support of Conjecture~\ref{conjecturecliques}, we show it for all cliques of size $q\geq 4$ with odd number of edges. 
\begin{theo}
\label{maintheorem}
Let $q\geq 10$ be an integer satisfying $q\equiv 2$ or $3 \mod 4$, and let $\ell=\binom{q}{2}$. For every $k\geq 1$ and $n=(\ell+1)^k$ there exists a
completely balanced edge-coloring of $K_n$ with $\ell$ colors without a rainbow $K_q$, i.e. $d(n,K_q)=\infty$.
\end{theo}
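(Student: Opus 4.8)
The plan is to build the colouring recursively from a single ``atomic'' colouring on $\ell+1$ vertices and to reduce the whole statement to a base case. Since $q\equiv 2$ or $3\pmod 4$, the number of edges $\ell=\binom q2$ is odd, hence $\ell+1$ is even and $K_{\ell+1}$ admits a $1$-factorization, i.e.\ a proper edge-colouring $c_0$ with exactly $\ell$ colours in which every vertex is incident to each colour exactly once. I would fix such a $c_0$ on vertex set $[\ell+1]:=\{0,1,\dots,\ell\}$ with colour set $[\ell]$, chosen so that $K_{\ell+1}$ carries \emph{no} rainbow $K_q$; constructing this $c_0$ is the crux and I return to it below. Given $c_0$, identify the vertex set of $K_n$ with $[\ell+1]^k$ and colour an edge $\{u,v\}$, $u\ne v$, by $c_0(u_i,v_i)$, where $i=i(u,v)$ is the smallest coordinate in which $u$ and $v$ differ.

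First I would check that this product colouring is completely balanced. Fix a vertex $u$ and a colour $\alpha$. Because $c_0$ is a $1$-factorization, for each coordinate $i$ there is exactly one value $w\ne u_i$ with $c_0(u_i,w)=\alpha$, and the edges receiving colour $\alpha$ through coordinate $i$ are exactly those $v$ agreeing with $u$ before $i$, with $v_i=w$, and arbitrary afterwards, giving $(\ell+1)^{k-i}$ of them. Summing over $i$ yields
\begin{equation*}
\sum_{i=1}^{k}(\ell+1)^{k-i}=\frac{(\ell+1)^k-1}{\ell}=\frac{n-1}{\ell},
\end{equation*}
so every vertex sees every colour exactly $(n-1)/\ell$ times, as required; in particular $n=(\ell+1)^k\equiv 1\pmod\ell$.

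Next I would show that a rainbow $K_q$ in the product colouring forces one in $c_0$, so that the construction inherits rainbow-freeness from its atom. Take $q$ vertices $V$ and partition them by their first coordinate into parts $V^{a_1},\dots,V^{a_t}$. All $|V^{a_i}|\cdot|V^{a_j}|$ edges between two distinct parts receive the \emph{same} colour $c_0(a_i,a_j)$, so if $V$ spans a rainbow $K_q$ then between any two parts there is at most one edge; this forces either a single part ($t=1$) or all parts singletons ($t=q$). In the first case $V$ lies in one block isomorphic to $[\ell+1]^{k-1}$ and is coloured by the product colouring on the remaining coordinates, and I conclude by induction on $k$. In the second case the first coordinates of $V$ are $q$ distinct elements of $[\ell+1]$ and the induced colouring is exactly $c_0$ restricted to them, which by assumption is not rainbow. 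Hence the product colouring has no rainbow $K_q$, which together with the previous paragraph proves the theorem modulo the base case.

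The main obstacle is therefore the atomic construction: a $1$-factorization of $K_{\ell+1}$, $\ell=\binom q2$, with no rainbow $K_q$. Here a rainbow $K_q$ is precisely a $q$-set $S$ meeting every colour class (perfect matching) in exactly one edge; equivalently, no matching has two edges inside $S$. A natural first attempt is the rotational factorization on $\mathbb Z_\ell\cup\{\infty\}$ with $c_0(i,j)=i+j$ and $c_0(\infty,i)=i$, under which a rainbow $K_q$ avoiding $\infty$ is a $q$-set whose $\binom q2$ pairwise sums exhaust $\mathbb Z_\ell$ --- a perfect sum (Sidon) set --- and similarly for the sets through $\infty$. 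The difficulty is that such additive configurations are exactly the ones governed by the difference-set phenomena behind Theorem~\ref{mostcounter}, so ruling them out unconditionally for all admissible $q$ seems out of reach with moment or character estimates alone; I expect instead to engineer a non-additive $1$-factorization in which the matchings are deliberately correlated, so that any $q$-set over-concentrates some colour and a short counting argument (using $q\ge 10$ for slack and the oddness of $\ell$ for a parity invariant) shows that two edges of $S$ always share a colour. Making this forcing argument work for every $q\equiv 2,3\pmod 4$ is where the real work lies.
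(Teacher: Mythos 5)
Your reduction to a base case is sound and is essentially the paper's own route (iterated lexicographic products, Lemmas~\ref{lexicoproduct} and~\ref{iteralexico}): your ``first differing coordinate'' coloring is exactly the $k$-fold lexicographic product, and your two verifications (complete balancedness and inheritance of rainbow-freeness) are correct. The genuine gap is the base case, which you leave unconstructed --- and, critically, you dismiss the very construction that works. The rotational factorization you consider (the paper's coloring~\eqref{coloring}; note that your $c_0(\infty,i)=i$ is not a $1$-factorization, since vertex $k\neq 0$ would then see color $k$ on both $\{\infty,k\}$ and $\{k,0\}$; the center must get $c_0(\infty,i)=2i$) is rainbow-$K_q$-free for an elementary reason. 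As you yourself observe, a rainbow $q$-set avoiding the center has all $\binom{q}{2}$ pairwise sums of distinct elements distinct in $\mathbb{Z}_\ell$, i.e.\ it is a weak Sidon set (a weak $2$-Sidon set in the paper's terminology, Lemma~\ref{Sidonrainbow}). But weak $2$-Sidon sets in $\mathbb{Z}_\ell$, $\ell$ odd, have size at most $\sqrt{\ell}+\tfrac{5}{2}$ by Cilleruelo--Ruzsa--Vinuesa, inequality~\eqref{weakSidon}, and since $\ell=\binom{q}{2}$ this is roughly $q/\sqrt{2}+\tfrac{5}{2}<q$ for $q\geq 10$. A rainbow set through the center is handled the same way via $2$-Sidon sets and~\eqref{Sidon}. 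No engineered ``non-additive'' factorization or forcing argument is needed.

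Your reason for abandoning this route --- that the additive configurations are ``exactly the ones governed by the difference-set phenomena behind Theorem~\ref{mostcounter}'' and hence out of reach --- conflates two different regimes. In Lemma~\ref{2cons} a rainbow $K_q$ corresponds to a perfect difference set in $\mathbb{Z}_n$ with $n=q^2-q+1\approx q^2$; that group is just large enough for such sets to exist (Singer's construction when $q-1$ is a prime power), which is why nonexistence there is hard and tied to the Prime Power Conjecture. Here, by contrast, the relevant group is $\mathbb{Z}_\ell$ with $\ell=\binom{q}{2}\approx q^2/2$: your ``perfect sum set'' of size $q$ would be a Sidon-type set of size $q$ in a group of order only about $q^2/2$, whereas Sidon-type sets in an abelian group of order $m$ have size at most roughly $\sqrt{m}$, not $\sqrt{2m}$ --- equal differences produce equal sums ($a-b=c-d$ gives $a+d=c+b$), so distinct sums force distinct differences up to the few degenerate cases responsible for the $+\tfrac{5}{2}$, and there are $q(q-1)\approx 2\ell$ ordered differences to fit into $\ell-1$ nonzero residues. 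This factor of $2$ is precisely what makes the base case unconditional, and it is the heart of the paper's proof.
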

We remark that Theorem~\ref{maintheorem} can be extended to hold for $q=6,7$, however, this requires a more careful analysis of our construction which we omit. 

Erd\H{o}s and Tuza~\cite{ErdTu} also asked the following question in the setting where those edge-colorings of $K_n$ use more colors than the number of edges in $F$.

\begin{prob}[Erd\H{o}s, Tuza~\cite{ErdTu}]
\label{Erdos2}
For a fixed positive integer $\ell$ and any sufficiently large integer  $n$, does every $(\ell+1,\lfloor (n-1)/(\ell+1)\rfloor)$ edge-coloring of $K_n$ contain every graph $F$ on $\ell$ edges as a rainbow subgraph?
\end{prob}
Tuza repeated both questions in \cite{Tuza} and remarked that he expects the answer to Question~\ref{Erdos2} to be affirmative. We answer it in the negative. 

\begin{theo}
\label{corl:maintheorem2}
Let $q\geq 8$ be an integer satisfying $q\equiv 0$ or $1 \mod 4$, and let $\ell=\binom{q}{2}$. For every $k\geq 1$ there exists 
completely balanced edge-coloring of $K_n$ with $\ell+1$ colors, for  $n={(\ell+2)^k}$,  without a rainbow $K_q$.
\end{theo}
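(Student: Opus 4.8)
The plan is to reduce the whole statement to a single ``base'' coloring on $\ell+2$ vertices via a product (blow-up) construction, and then to build that base coloring from a standard additive $1$-factorization, where avoiding a rainbow $K_q$ turns into a question about Sidon-type sets and difference sets. First I record the arithmetic that makes everything fit: since $n=(\ell+2)^k\equiv 1\pmod{\ell+1}$, a completely balanced coloring of $K_n$ with $\ell+1$ colors has each vertex incident to exactly $(n-1)/(\ell+1)=\frac{(\ell+2)^k-1}{(\ell+2)-1}$ edges of each color. Because $q\equiv 0,1\pmod 4$ forces $\ell=\binom q2$ to be even, the modulus $\ell+1$ is odd and $\ell+2$ is even, so $K_{\ell+2}$ admits a $1$-factorization into $\ell+1$ perfect matchings. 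I take as the base the coloring $c_0$ of $K_{\ell+2}$ on the vertex set $\mathbb Z_{\ell+1}\cup\{\infty\}$ defined by $c_0(\{x,y\})=x+y\bmod(\ell+1)$ and $c_0(\{\infty,x\})=2x\bmod(\ell+1)$; since $\ell+1$ is odd, each color class is a perfect matching.

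Next, the blow-up. I identify the vertices of $K_n$ with $k$-tuples over $\mathbb Z_{\ell+1}\cup\{\infty\}$ and color a pair $u\ne v$ by $c_0(u_i,v_i)$, where $i$ is the first coordinate in which $u$ and $v$ differ. A direct count shows that, because every color class of $c_0$ is a perfect matching, each vertex sees exactly $\sum_{i=1}^{k}(\ell+2)^{k-i}=(n-1)/(\ell+1)$ edges of each color, so the coloring is completely balanced. For the rainbow condition I argue by induction on $k$: given any $q$ vertices, group them by their first coordinate into $t$ classes. If $t=1$ the clique lies inside one blown-up part and I recurse; if $2\le t<q$, some part contains two of the vertices while another part is nonempty, and all edges between two parts $a,b$ carry the single color $c_0(a,b)$, forcing a repeated color; and if $t=q$ the induced coloring is exactly $c_0$ on $q$ distinct base vertices, which by the base case is not rainbow. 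Hence it suffices to show that $c_0$ has no rainbow $K_q$.

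The base case is where the real work lies. A rainbow $K_q$ in $c_0$ is a set of $q$ vertices on which all $\binom q2=\ell$ edge-colors are distinct, and I split on whether $\infty$ is used. A rainbow $K_q$ through $\infty$ needs $q-1$ elements $x_1,\dots,x_{q-1}\in\mathbb Z_{\ell+1}$ with all sums $x_i+x_j$ (for $i\le j$, including the diagonal $2x_i$) distinct, i.e.\ a Sidon set of size $q-1$; counting differences gives $(q-1)(q-2)\le\ell=\binom q2$, which fails for every $q\ge 5$, so this case is immediately excluded. A rainbow $K_q$ avoiding $\infty$ needs $q$ elements of $\mathbb Z_{\ell+1}$ all of whose pairwise sums of \emph{distinct} elements are distinct; since there are exactly $\binom q2=(\ell+1)-1$ such sums, they would cover all of $\mathbb Z_{\ell+1}$ but one residue $t$, an almost-perfect additive configuration of size $q$.

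Ruling out this last configuration is the main obstacle, and it is exactly the difference-set phenomenon underlying Theorem~\ref{mostcounter}; the point is that the hypothesis $q\equiv 0,1\pmod4$ (together with $q\ge 8$) is meant to make the obstruction unconditional. Concretely, writing $f(z)=\sum_{a\in A}z^{a}$ in the group ring of $\mathbb Z_{\ell+1}$, such a set $A$ would satisfy $f(z)^2=2\sum_{s}z^{s}-2z^{t}+f(z^2)$; evaluating at a nontrivial character $\chi$ (with $\hat f(\chi)=\sum_{a\in A}\chi(a)$) yields the rigid quadratic system $\hat f(\chi)^2=\hat f(\chi^2)-2\chi(t)$ over all characters. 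I expect this, combined with the additive-energy identity which here reads $E(A)=2q^2-q+4T$ with $T$ the number of three-term progressions in $A$, to force a Bruck--Ryser--Chowla / sum-of-two-squares type congruence that is unsolvable precisely because $4\mid q(q-1)$, i.e.\ because $q\equiv 0,1\pmod4$. This nonexistence is the crux; the boundary cases $q=6,7$ are exactly where the congruence loses its slack, which is consistent with the remark following Theorem~\ref{maintheorem} that they require a more careful, separate analysis.
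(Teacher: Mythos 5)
Your reduction is sound and matches the paper's: your base coloring $c_0$ is exactly the paper's coloring \eqref{coloring} with the odd modulus $\ell+1$ in place of $\ell$, and your first-differing-coordinate blow-up is precisely the iterated lexicographical product of Lemmas~\ref{lexicoproduct} and~\ref{iteralexico}. Your treatment of a rainbow $K_q$ through $\infty$ is also correct, and in fact more elementary than the paper's (which cites Bajnok's bound \eqref{Sidon} on $2$-Sidon sets): distinctness of all sums $x_a+x_b$, $a\le b$, forces all $(q-1)(q-2)$ ordered differences to be distinct and nonzero, and $(q-1)(q-2)>\binom{q}{2}=\ell$ for $q\ge 5$.

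The genuine gap is the case you yourself call the crux: a rainbow $K_q$ avoiding $\infty$, i.e.\ a set $A\subseteq\mathbb{Z}_{\ell+1}$ with $|A|=q$ all of whose $\binom{q}{2}$ pairwise sums of distinct elements are distinct. Here you offer only a speculative character-sum/Bruck--Ryser--Chowla sketch (``I expect this \dots to force a congruence''), with no derivation, and your guiding intuition is off: the hypothesis $q\equiv 0,1 \pmod 4$ does not enter through any congruence obstruction. It is used only to make $\ell=\binom{q}{2}$ even, hence $\ell+1$ odd, so that $K_{\ell+2}$ admits the $1$-factorization $c_0$ in the first place and so that Sidon-type bounds for odd modulus apply. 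The paper closes exactly this case with a known, unconditional result of Cilleruelo, Ruzsa and Vinuesa \cite{Cilleruelo}, inequality \eqref{weakSidon}: a weak $2$-Sidon set in $\mathbb{Z}_n$ with $n$ odd has size at most $\sqrt{n}+\frac{5}{2}$. Your set $A$ is precisely such a set with $n=\binom{q}{2}+1$, so one would need $q\le\sqrt{\binom{q}{2}+1}+\frac{5}{2}$, which is false for every $q\ge 8$ (for $q=8$: $\sqrt{29}+2.5\approx 7.89<8$). So no nonexistence theorem for difference sets and no mod-$4$ arithmetic is needed here; the analogue, for weak Sidon sets, of the counting bound you used in the $\infty$ case is all that is required, and supplying it (by citation or proof) is exactly what your argument is missing. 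The connection to perfect difference sets and the Prime Power Conjecture belongs to the proof of Theorem~\ref{mostcounter}, which lives in $\mathbb{Z}_{q^2-q+1}$, not to this theorem; also, your refinement that the sums miss exactly one residue, while true, is never needed.
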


Our paper is organized as follows.
In Section~\ref{lexico} we introduce the so-called lexicographical product of colorings which we will use for all our constructions. In Section~\ref{sec:Corls} we prove Theorems~\ref{maintheorem} and \ref{corl:maintheorem2}, and finally in Section~\ref{difference} we prove Theorem~\ref{mostcounter}.


\section{Iterated lexicographical product colorings}
\label{lexico}
For a natural number $n$, let $[n]=\{1, \ldots, n\}$.
For sets of colors $C_1$ and $C_2$, and edge-colorings $c_1:E(K_n)\rightarrow C_1$ and $c_2:E(K_m)\rightarrow C_2$, we define the \emph{lexicographical product coloring} $c_1\times c_2 : E(K_{nm})\rightarrow C_1\cup C_2$ in the following way. Let the vertex set of $K_{nm}$ be the set of pairs $(i,j)$ with $i\in [m]$ and $j\in [n]$ and define
\begin{align*}
    (c_1\times c_2) ((i_1,j_1),(i_2,j_2))=
    \begin{cases}
    c_2(j_1,j_2),& \text{ if } i_1=i_2, j_1\neq j_2, \\
    c_1(i_1,i_2),& \text{ if } i_1\neq i_2,
    \end{cases}
\end{align*}
for $i_1,i_2\in [m]$ and $j_1,j_2\in [n]$ satisfying $(i_1,j_1)\neq (i_2,j_2)$. Lexicographic products have been used in Ramsey theory, see e.g.~\cite{Abbott}. The following lemma shows that taking lexicographic products maintains the properties  of not containing rainbow cliques and being  completely balanced.  

\begin{lemma}
\label{lexicoproduct}
Let $n,m,q\geq 3$ be positive integers and $C$ be a set of colors. Further, let $c_1:E(K_n)\rightarrow C$ and $c_2:E(K_m)\rightarrow C$ be balanced edge-colorings without a rainbow $K_q$. Then $c_1\times c_2$ also is a balanced coloring without a rainbow $K_q$.
\end{lemma}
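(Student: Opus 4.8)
The plan is to verify the two properties—completely balanced and rainbow-$K_q$-free—separately, using the explicit case distinction in the definition of $c_1\times c_2$.

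First I would check that $c_1\times c_2$ is a balanced coloring of $K_{nm}$ in the color set $C$. Fix a vertex $(i,j)$ and a color $\gamma\in C$. The edges incident to $(i,j)$ split into two groups according to the case distinction in the definition. The ``inner'' edges, those to vertices $(i,j')$ with $j'\neq j$, receive the color $c_2(j,j')$; since $c_2$ is balanced, exactly $d_2:=(m-1)/|C|$ of these have color $\gamma$. The ``outer'' edges, those to vertices $(i',j')$ with $i'\neq i$, receive the color $c_1(i,i')$; for each fixed $i'\neq i$ there are exactly $n$ choices of $j'$, so each color $c_1(i,i')$ appears $n$ times among the outer edges. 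Since $c_1$ is balanced, the number of $i'\neq i$ with $c_1(i,i')=\gamma$ equals $d_1:=(n-1)/|C|$, contributing $n\, d_1$ outer edges of color $\gamma$. Hence the total number of $\gamma$-colored edges at $(i,j)$ is $d_2+n\,d_1$, which is independent of the choice of $(i,j)$ and of $\gamma$. This establishes that $c_1\times c_2$ is balanced.

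Next I would show the absence of a rainbow $K_q$. Suppose for contradiction that there is a set $Q$ of $q$ vertices spanning a rainbow clique, and project it to the first coordinate, writing $I=\{i : (i,j)\in Q \text{ for some } j\}$. If $|I|=1$, then all vertices of $Q$ lie in a single ``copy'' of $K_n$ and every edge of $Q$ is colored by $c_2$ according to the second coordinate; this would yield a rainbow $K_q$ in the coloring $c_2$, contradicting our assumption on $c_2$. Otherwise $|I|\geq 2$, and I would consider the partition of $Q$ induced by the first coordinate. The key observation is that any two vertices of $Q$ with distinct first coordinates $i_1\neq i_2$ receive the color $c_1(i_1,i_2)$, which depends only on $i_1$ and $i_2$. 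Therefore, picking one representative vertex from each nonempty class gives $|I|$ vertices whose pairwise edges reproduce exactly the $c_1$-colors on the clique with vertex set $I$ in $K_n$.

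The main obstacle—and the crux of the argument—is to derive a contradiction in this second case, since $|I|$ may be strictly smaller than $q$, so we cannot directly invoke rainbow-freeness of $c_1$ on a $K_q$. The resolution is that a rainbow clique cannot have two classes each of size at least two: if some class $i$ contains two vertices $(i,j_1),(i,j_2)$ and another class $i'\neq i$ also contains two vertices $(i',j_1'),(i',j_2')$, then both inner edges $\{(i,j_1),(i,j_2)\}$ and $\{(i',j_1'),(i',j_2')\}$ are colored by $c_1(i,i')$—wait, no; more carefully, the edges between the two classes all carry the single color $c_1(i,i')$, so any class of size $\geq 2$ joined to another vertex already forces a repeated color unless the configuration is severely restricted. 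I would argue that in a rainbow clique at most one class has size exceeding one: indeed, if class $i$ has a vertex $(i,j)$ and class $i'$ has two vertices, the two edges from $(i,j)$ to those vertices both get color $c_1(i,i')$, a repetition. Hence every class except possibly one is a singleton; say class $i^\ast$ has size $s\geq 1$ and all others are singletons. Then the $s$ inner vertices together with one representative from each other class form a rainbow structure: the inner edges use $c_2$-colors, the cross edges use $c_1$-colors, and all must be distinct. Collapsing class $i^\ast$ to a single vertex yields a rainbow clique on $|I|$ vertices under $c_1$, while the $s$ inner vertices yield a rainbow clique under $c_2$; combined with the inequality $|I|+(s-1)=q$ I would show that either $|I|\geq q$ forcing a rainbow $K_q$ in $c_1$, or $s\geq q$ forcing one in $c_2$, each a contradiction. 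Carefully tracking which colors may coincide across the two parts (they cannot, since rainbow) is the delicate bookkeeping step, but the structural dichotomy above is what makes it go through.
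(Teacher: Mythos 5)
Your balanced-degree computation is fine and agrees with the paper's. In the rainbow part you do find the right key observation --- if a class (vertices sharing a first coordinate) contains two vertices $(i',j_1'),(i',j_2')$ and there is \emph{any} vertex $(i,j)$ with $i\neq i'$, then the two edges from $(i,j)$ into that class both receive the color $c_1(i,i')$, so the clique is not rainbow --- but you then fail to apply it at full strength, and the argument you substitute for it is invalid. Your observation says that in a rainbow clique a class of size at least two cannot coexist with any vertex outside it; hence if $|I|\geq 2$ then \emph{every} class is a singleton, so $|I|=q$ and the representatives give a rainbow $K_q$ under $c_1$, a contradiction, and the proof is finished. Instead you record only the weaker conclusion ``at most one class has size exceeding one,'' keep alive the case of a class of size $s\geq 2$ alongside singleton classes (a case your own observation has already ruled out), and try to dispose of it with the claim that $|I|+(s-1)=q$ forces $|I|\geq q$ or $s\geq q$. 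That deduction is false: for $q=5$ one can have $|I|=3$ and $s=3$, and a rainbow $K_3$ under $c_1$ together with a rainbow $K_3$ under $c_2$ contradicts nothing, since only a rainbow $K_q$ is excluded by hypothesis. So the final step of your proof, as written, does not go through.

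The gap is easy to close, and the fix is already contained in your proof: drop the dichotomy and apply the observation directly. This is exactly what the paper does, with a clean three-case split: (i) all first coordinates equal, so the clique is colored by $c_2$ and cannot be rainbow; (ii) all first coordinates distinct, so it is colored by $c_1$ and cannot be rainbow; (iii) otherwise there are $x=(x_1,x_2)$, $y=(y_1,y_2)$, $z=(z_1,z_2)$ in the clique with $x_1=y_1\neq z_1$, and then $(c_1\times c_2)(x,z)=c_1(x_1,z_1)=c_1(y_1,z_1)=(c_1\times c_2)(y,z)$, so the clique is not rainbow. Case (iii) is precisely your observation; invoked this way, all bookkeeping about class sizes and counting becomes unnecessary.
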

\begin{proof}
Clearly, $c_1\times c_2$ is a balanced coloring: If in $c_1$ every vertex is incident  $k_1$  edges of every color and in $c_2$ every vertex is incident to  $k_2$ edges of every color, then in $c_1 \times c_2$ every vertex is incident $k_2+k_1m$  edges of every color.\\

Let $S\subseteq V(K_{mn})$ be a set of $q$ vertices.
If all $q$ vertices have the same first coordinate, then $G[S]$ is colored according to the coloring $c_2$, and thus, $S$ is not rainbow. If all $q$ vertices have different values for their first coordinate, then $G[S]$ is colored according to the coloring $c_1$, and thus, $S$ is not rainbow. Otherwise, there are three vertices $x=(x_1,x_2),y=(y_1,y_2),z=(z_1,z_2)\in S$ such that $x_1=y_1\neq z_1$. Then $(c_1\times c_2)(x,z)=c_1(x_1,z_1)=c_1(y_1,z_1)=(c_1\times c_2)(y,z)$ and therefore $S$ is not rainbow. We conclude that the coloring $c_1\times c_2$ does not contain a rainbow $K_q$, completing the proof.
\end{proof}
Iteratively applying Lemma~\ref{lexicoproduct} to the same coloring, we obtain the following. 
\begin{lemma}
\label{iteralexico}
If there exists a completely balanced edge-coloring of $K_n$ with $\ell$ colors and no rainbow $K_q$, then for every $k\geq 1$ there exists a completely balanced edge-coloring of $K_{n^k}$ with $\ell$ colors and no rainbow $K_q$. In particular,
if $d(n, K_q)=\infty$ for integers $n$ and $q$, then $d(n^k, K_q)=\infty$ for all $k\geq 1$.
\end{lemma}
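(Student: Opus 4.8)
The plan is to prove Lemma~\ref{iteralexico} by induction on $k$, using Lemma~\ref{lexicoproduct} as the single engine. The base case $k=1$ is simply the hypothesis: there exists a completely balanced edge-coloring of $K_n$ with $\ell$ colors and no rainbow $K_q$. For the inductive step, I would assume we have produced a completely balanced coloring $c_k : E(K_{n^k}) \to C$ with $|C| = \ell$ and no rainbow $K_q$, and then form the lexicographical product $c_k \times c_1$, where $c_1$ is the original coloring of $K_n$ guaranteed by the hypothesis.

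The key observation is that both factors use the \emph{same} color set $C$ of size $\ell$, so the product coloring $c_k \times c_1$ still uses only the $\ell$ colors in $C$ (since $c_1\times c_2$ maps into $C_1\cup C_2 = C$ when $C_1=C_2=C$), not more. By Lemma~\ref{lexicoproduct}, applied with the two balanced, rainbow-$K_q$-free colorings $c_k$ and $c_1$, the product $c_k \times c_1$ is again balanced and contains no rainbow $K_q$. It is defined on $K_{n^k \cdot n} = K_{n^{k+1}}$, which advances the induction. The only remaining point is to check that the product is not merely balanced but \emph{completely} balanced, i.e.\ that each vertex sees exactly $(n^{k+1}-1)/\ell$ edges in each color. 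From the balance computation in the proof of Lemma~\ref{lexicoproduct}, if each vertex meets $k_2$ edges of each color in the first factor and $k_1$ in the second, then in the product it meets $k_2 + k_1 m$ edges of each color; feeding in the complete-balance values $(n^k-1)/\ell$ and $(n-1)/\ell$ together with $m = n^k$ gives $\frac{n^k-1}{\ell} + \frac{n-1}{\ell}\, n^k = \frac{n^{k+1}-1}{\ell}$, confirming complete balance.

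I do not anticipate a genuine obstacle here, since all the structural work is already packaged into Lemma~\ref{lexicoproduct}; the task is essentially bookkeeping. The one subtlety worth flagging is to make sure the color set is held fixed across all iterations rather than growing: one must take the product of the current coloring with the \emph{original} coloring (both over $C$), so that the color count stays at $\ell$ and the ``completely balanced'' arithmetic closes up exactly. Finally, the ``In particular'' clause is immediate: recalling that $d(n,K_q)=\infty$ is equivalent to the existence of a completely balanced $\ell$-coloring of $K_n$ with no rainbow $K_q$ (as noted in the paragraph following Question~\ref{Erdos1}), the first part of the lemma transfers this existence statement from $n$ to $n^k$ for every $k\geq 1$, which is exactly the assertion $d(n^k,K_q)=\infty$.
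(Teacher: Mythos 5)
Your proposal is correct and follows exactly the paper's intended argument: the paper proves Lemma~\ref{iteralexico} by "iteratively applying Lemma~\ref{lexicoproduct} to the same coloring," which is precisely your induction, with the product taken against the original coloring so the color set stays fixed at $\ell$ colors. Your explicit check that complete balance propagates, via $\frac{n^k-1}{\ell} + \frac{n-1}{\ell}\, n^k = \frac{n^{k+1}-1}{\ell}$, is the only detail the paper leaves implicit, and you have it right.
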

 Lemma~\ref{iteralexico} says that, in order to show that a clique $K_q$ is a negative example to Questions~\ref{Erdos1} or \ref{Erdos2}, it is sufficient to find the desired coloring for a single value of $n$.

\section{The proof of Theorems~\ref{maintheorem} and \ref{corl:maintheorem2}.}
\label{sec:Corls}
First, we consider a construction, that we shall use for  both theorems,  and show some of its properties.


\subsection{The construction}
\label{The coloring}
For a fixed odd integer $\ell$, $\ell\geq 3$, we define an edge-coloring $c$ of $K_{(\ell+1)}$ with vertex set $\{0,1, \ldots, \ell\}$ as follows:
\begin{align}
\label{coloring}
    c(i,j)=\begin{cases}
    i+i \mod \ell & \text{if $j=\ell$}, \\ 
    i+j \mod \ell & \text{otherwise},
    \end{cases}
\end{align}
for $0\leq i<j\leq \ell$. 

We remark that this  coloring was known already over a hundred years ago, see for example~\cite{Lucas}  and is a standard example of a so-called $1$-factorization of the complete graph, i.e. a decomposition of the complete graph into perfect matchings. Informally, the coloring \eqref{coloring} corresponds to  arranging  vertices   from $\{0,1, \ldots, \ell-1\}$ as the corners of a regular $\ell$-gon  in the plane and placing the vertex $\ell$ in the center of the $\ell$-gon. Every color class consists of an edge from the center vertex $\ell$ to a vertex together with all possible perpendicular edges.
See Figure~\ref{coloringK16} for an illustration of this coloring when $\ell=15$. Note that every color class in the coloring \eqref{coloring} is a perfect matching. The coloring can be used as a schedule of competitions with an even number of competitors, in which each contestant plays a game every round and additionally meets every other competitor exactly one time.

\begin{figure}[h!]
    \centering
\begin{tikzpicture}
\path (0,1.2) -- (0,-1.1); 
\draw
\foreach \i in {0,1,...,14}{
(90+24*\i:2.5) coordinate(\i) node[vtx]{}
};
\draw (0:0) coordinate(15) node[vtx]{};

\newcommand{\ovcolnum}[1]
{
    \ifthenelse{#1 = 0}
    {\colorlet{bcolor}{red}}
    {
        \ifthenelse{#1 = 1}
        {\colorlet{bcolor}{lime}}
        {
            \ifthenelse{#1 = 2}
            {\colorlet{bcolor}{orange}}
            {
                \ifthenelse{#1 = 3}
                {\colorlet{bcolor}{yellow}}
                {
                    \ifthenelse{#1 = 4}
                    {\colorlet{bcolor}{brown}}
                    {
                        \ifthenelse{#1 = 5}
                        {\colorlet{bcolor}{green}}
                        {
                            \ifthenelse{#1 = 6}
                            {\colorlet{bcolor}{pink}}
                            {
                            \ifthenelse{#1 = 7}
                            {\colorlet{bcolor}{lightgray}}
                            {
                            \ifthenelse{#1 = 8}
                            {\colorlet{bcolor}{cyan}}
                            {
                            \ifthenelse{#1 = 9}
                            {\colorlet{bcolor}{magenta}}
                            {
                            \ifthenelse{#1 = 10}
                            {\colorlet{bcolor}{olive}}
                            {
                            \ifthenelse{#1 = 11}
                            {\colorlet{bcolor}{violet}}
                            {
                            \ifthenelse{#1 = 12}
                            {\colorlet{bcolor}{darkgray}}
                            {
                                \ifthenelse{#1 = 13}
                                {\colorlet{bcolor}{teal}}
                                {
                                    \ifthenelse{#1 = 14}
                                    {\colorlet{bcolor}{blue}}
                                    {
                                        \colorlet{bcolor}{purple}
                                    }   
                                }   
                            }}}}}}}   
                        }   
                    }   
                }   
            }   
        }
    }
}

\foreach \j in {0,1,...,14}{
\ovcolnum{\j};
\draw[color=bcolor] (\j) to (15);}

\foreach \j in {0,1,...,14}{
\ovcolnum{\j};
\foreach \i in {1,2,3,4,5,6,7}{
\draw[color=bcolor] (90+24*\i+24*\j:2.5)  to (90+24*\j-24*\i:2.5);}}
\end{tikzpicture}
    \caption{The edge-coloring $c: E(K_{16}) \rightarrow [14] \cup\{0\}$ as defined in \eqref{coloring} when $\ell=15$.}
    \label{coloringK16}
\end{figure}

\subsection{Properties of the construction}
\label{verthecol}

We use theory about Sidon sets in abelian groups to prove that there is no rainbow clique of size roughly $\sqrt{\ell}$ in the edge-coloring $c$, defined in \eqref{coloring}. Given an abelian group $G$ and $A\subseteq G$, define  
\begin{align*}
    r_A(x):=|\{(a_1,a_2): \ a_1,a_2\in A, a_1+a_2=x\}|
    \end{align*}
and
\begin{align*}
    r'_A(x)=|\{(a_1,a_2): \ a_1,a_2\in A, a_1\neq a_2, a_1+a_2=x\}|.
\end{align*}
A set $A\subseteq G$ is called 2-\emph{Sidon-set} if $r_A(x)\leq 2$ for all $x\in G$, and it is called weak 2-\emph{Sidon set} if $r'_A(x)\leq 2$ for all $x\in G$. Cilleruelo, Ruzsa and Vinuesa [Corollary 2.3. in \cite{Cilleruelo}] proved that a weak 2-Sidon set $A\subset \mathbb{Z}_\ell$, where $\ell$ is odd, satisfies 
\begin{align}
\label{weakSidon}
    |A|\leq \sqrt{\ell}+\frac{5}{2}.
\end{align}
Bajnok [Proposition C.7 in \cite{Bajnok}] proved that for a 2-Sidon set $A\subset \mathbb{Z}_\ell$, 
\begin{align}
\label{Sidon}
    |A|\leq \frac{\sqrt{4\ell-3}+1}{2}.
\end{align}

The following lemma establishes a connection between rainbow cliques in the coloring $c$ and Sidon sets in $\mathbb{Z}_\ell$. 
A set $S\subseteq V(G)$ is  called \emph{rainbow} if all edges in $G[S]$ is  rainbow. 
\begin{lemma}
\label{Sidonrainbow}
Let $\ell$ be an odd integer, $\ell \geq 3$,  and $S\subseteq V(K_{\ell+1})$ be rainbow in the coloring $c: E(K_{\ell+1}) \rightarrow \{0, 1, \ldots, \ell-1\}$
as defined in \eqref{coloring}. If $\ell\in S$, then $S\setminus \{\ell\}$ is a $2$-Sidon set in $\mathbb{Z}_\ell$, otherwise $S$ is a weak $2$-Sidon set in $\mathbb{Z}_\ell$.  
\end{lemma}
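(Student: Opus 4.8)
The plan is to translate the rainbow condition directly into the arithmetic of $\mathbb{Z}_\ell$ by using the defining formula \eqref{coloring}. Recall that for two vertices $i<j$ from $\{0,1,\ldots,\ell-1\}$, the color of the edge $ij$ is $i+j \bmod \ell$, while an edge to the special vertex $\ell$ gets color $2i \bmod \ell$. A set $S$ is rainbow precisely when all the colors on the edges of $K_{\ell+1}[S]$ are pairwise distinct, so my strategy is to show that a repeated color (a ``collision'') in $S$ corresponds exactly to a failure of the relevant Sidon condition, and conversely.

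\textbf{Case $\ell \in S$.}
Here I would set $A = S \setminus \{\ell\} \subseteq \mathbb{Z}_\ell$ and argue that $A$ is a $2$-Sidon set, i.e.\ $r_A(x)\le 2$ for every $x$. The key observation is that the edges within $A$ carry colors $a_1+a_2$ (for distinct $a_1,a_2\in A$) and the edges from $A$ to the center vertex $\ell$ carry colors $2a = a+a$ (for $a\in A$). Thus the multiset of colors used on $K_{\ell+1}[S]$ is exactly $\{\,a_1+a_2 \bmod \ell : a_1,a_2\in A,\ a_1\le a_2\,\}$ — that is, every unordered pair from $A$ \emph{including} the diagonal pairs $a+a$. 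Now suppose $r_A(x)\ge 3$ for some $x$; since solutions to $a_1+a_2=x$ come in ordered pairs and a value $r_A(x)\ge 3$ forces at least two distinct \emph{unordered} representations of $x$ as a sum (counting a diagonal $a+a$ as one unordered pair contributing $1$ to $r_A$, and an off-diagonal pair contributing $2$). I would check the small arithmetic: three ordered representations means either two off-diagonal unordered pairs, or one off-diagonal and one diagonal, each of which yields two edges of $S$ sharing the same color $x$, contradicting rainbowness. Conversely a genuine color collision produces $r_A(x)\ge 3$. Hence rainbow $\iff$ $2$-Sidon.

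\textbf{Case $\ell \notin S$.}
Now $S\subseteq \mathbb{Z}_\ell$ and only the off-diagonal colors $a_1+a_2$ with $a_1\ne a_2$ appear, so the relevant quantity is $r'_S(x)$. A color collision means two \emph{distinct} unordered pairs $\{a_1,a_2\}\ne\{a_3,a_4\}$ with $a_1+a_2=a_3+a_4=x$ and $a_1\ne a_2$, $a_3\ne a_4$; this is exactly $r'_S(x)\ge 4$ in ordered-pair count, equivalently at least two distinct off-diagonal unordered representations, which is what $r'_S(x)> 2$ detects. So $S$ rainbow $\iff$ $r'_S(x)\le 2$ for all $x$ $\iff$ $S$ is a weak $2$-Sidon set.

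\textbf{Main obstacle.}
The genuine subtlety, and the step I would be most careful about, is the bookkeeping between ordered pairs (which define $r_A$, $r'_A$) and the unordered edges/colors of $K_{\ell+1}[S]$, together with the treatment of diagonal sums $a+a$ and the role of $\ell$ being odd. Oddness matters because it guarantees $2$ is invertible in $\mathbb{Z}_\ell$, so the map $a\mapsto 2a$ is a bijection; this ensures the center-vertex colors $2a$ are all distinct among themselves and interact cleanly with the pairwise sums, so that the Sidon threshold $2$ lines up with ``no repeated color.'' I would verify explicitly that a diagonal coincidence $2a = a_1+a_2$ with $\{a_1,a_2\}$ off-diagonal indeed corresponds to the edge $a\ell$ and the edge $a_1a_2$ sharing a color, which is the mechanism forcing the $2$-Sidon (rather than weak $2$-Sidon) condition when $\ell\in S$.
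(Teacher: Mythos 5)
Your proposal is correct and follows essentially the same route as the paper's proof: translating a repeated color into three ordered representations $a_1+b_1=a_2+b_2=a_3+b_3$ of some $x\in\mathbb{Z}_\ell$, using oddness of $\ell$ to rule out two distinct diagonal representations $a+a=a'+a'$, and observing that a diagonal/off-diagonal coincidence is a collision between a center edge $c(a,\ell)=2a$ and an internal edge $c(a_1,a_2)=a_1+a_2$. The ordered-versus-unordered bookkeeping you flag as the main obstacle is handled in the paper exactly as you outline it, so no gap remains.
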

\begin{proof}
In this proof addition will be in $\mathbb{Z}_\ell$.

First, let $\ell\in S$ and define $S'=S\setminus \{\ell\}$. Assume, towards contradiction, that there exists $x\in \mathbb{Z}_{\ell}$ such that $r_{S'}(x)\geq 3$, i.e. 
$x=a_1+b_1=a_2+b_2=a_3+b_3$, for three distinct pairs $(a_i,b_i)$, $a_i,  b_i\in S'$ and $i=1,2, 3$. 
Assume first that $a_i=b_i$, for some $i$, say for $i=1$.  Since $\ell$ is odd, $a_1+a_1\neq a_i+a_i$ for $a_i\neq a_1$,  so 
we have without loss of generality that $b_2\neq a_1$. Then 
since $a_1+a_1=a_2+b_2$, we have  $c(a_2,b_2)=a_2+b_2=a_1+a_1=c(a_1,\ell)$, 
contradicting that $S$ is rainbow. We conclude that for each $i=1,2,3$, $a_i\neq b_i$. Since $(a_i, b_i)$ are distinct pairs $i=1,2,3$, without loss of generality $\{a_1, b_1\}\neq \{a_2, b_2\}$.
By the definition of the coloring \eqref{coloring}, $c(a_1,b_1)=c(a_2,b_2)$, contradicting that $S$ is rainbow. We conclude that $S'=S\setminus \{\ell\}$ is a $2$-Sidon set.

Now, let $\ell\notin S$. Assume, towards a contradiction, that there exists $x\in \mathbb{Z}_{\ell}$ such that $r'_S(x)\geq 3$, i.e. for three distinct  pairs $(a_i,b_i)$, $i=1, 2,  3$  satisfying $a_i,b_i\in S, a_i\neq b_i$, we have $a_i+b_i=x$. By the same argument as before, this contradicts that $S$ is rainbow. We conclude that $S$ is a weak $2$-Sidon set.
\end{proof}

\begin{lemma}
\label{K16israinbowk6-free}
Let $\ell$ be an odd integer.
The coloring $c: E(K_{\ell+1}) \rightarrow \{0, 1, \ldots, \ell-1\}
$ as defined in \eqref{coloring} is a completely balanced coloring that does not contain a rainbow $K_m$, where $m=\lfloor \sqrt{\ell}+\frac{7}{2}\rfloor$.
\end{lemma}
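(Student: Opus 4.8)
The plan is to combine the two structural facts already established: Lemma~\ref{Sidonrainbow}, which translates rainbow sets in the coloring $c$ into Sidon-type sets in $\mathbb{Z}_\ell$, and the size bounds \eqref{weakSidon} and \eqref{Sidon} coming from Cilleruelo--Ruzsa--Vinuesa and Bajnok. Two properties must be verified: that $c$ is completely balanced, and that $c$ has no rainbow $K_m$ for $m=\lfloor \sqrt{\ell}+\tfrac{7}{2}\rfloor$.

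\medskip

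For the balanced property I would argue directly from the definition \eqref{coloring}. Fix a vertex $i$ and a color $t\in\{0,\ldots,\ell-1\}$. Since $\ell$ is odd, the map $j\mapsto i+j \bmod \ell$ on $\{0,\ldots,\ell-1\}\setminus\{i\}$ together with the special edge to vertex $\ell$ (colored $i+i$) realizes each color class as a perfect matching, so every vertex is incident to exactly one edge of each color. This is exactly the classical $1$-factorization remark made after \eqref{coloring}; since there are $\ell$ colors on $K_{\ell+1}$ and $n-1=\ell$, the coloring is completely balanced with parameter $(n-1)/\ell=1$.

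\medskip

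For the absence of a rainbow $K_m$, I would argue by contradiction: suppose $S\subseteq V(K_{\ell+1})$ is rainbow with $|S|=m$. By Lemma~\ref{Sidonrainbow} there are two cases. If $\ell\in S$, then $S\setminus\{\ell\}$ is a $2$-Sidon set of size $m-1$ in $\mathbb{Z}_\ell$, so by \eqref{Sidon} we get $m-1\leq \tfrac{\sqrt{4\ell-3}+1}{2}<\sqrt{\ell}+\tfrac12$, hence $m<\sqrt{\ell}+\tfrac32$. If $\ell\notin S$, then $S$ is a weak $2$-Sidon set of size $m$, so by \eqref{weakSidon} we get $m\leq \sqrt{\ell}+\tfrac52$. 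The second case is the binding one; combining both cases, any rainbow set has size at most $\lfloor \sqrt{\ell}+\tfrac52\rfloor$. The claimed value $m=\lfloor \sqrt{\ell}+\tfrac72\rfloor$ is strictly larger than this bound, which yields the contradiction and proves that no rainbow $K_m$ exists.

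\medskip

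The only delicate point — and the step I expect to require the most care — is the elementary inequality chasing needed to show that $\lfloor\sqrt{\ell}+\tfrac72\rfloor$ genuinely exceeds the weak $2$-Sidon bound $\sqrt{\ell}+\tfrac52$ by enough to force a contradiction, accounting for the floor functions and the off-by-fractions from \eqref{weakSidon} and \eqref{Sidon}. I would handle this by noting that any rainbow set has size at most $\lfloor\sqrt\ell+\tfrac52\rfloor$, and then observing $\lfloor\sqrt\ell+\tfrac72\rfloor\ge\lfloor\sqrt\ell+\tfrac52\rfloor+1$ is not automatic, so the clean statement to prove is that a rainbow $K_m$ would force $m\le \sqrt\ell+\tfrac52$, contradicting $m=\lfloor\sqrt\ell+\tfrac72\rfloor>\sqrt\ell+\tfrac52$ whenever the fractional part of $\sqrt\ell$ permits; checking that the floor lands correctly for all odd $\ell\ge 3$ is the routine but essential verification.
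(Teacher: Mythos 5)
Your proposal is correct and follows essentially the same route as the paper's own proof: complete balancedness via the $1$-factorization property of the coloring \eqref{coloring}, and non-existence of a rainbow $K_m$ by applying Lemma~\ref{Sidonrainbow} and the bounds \eqref{Sidon} and \eqref{weakSidon} in the two cases $\ell\in S$ and $\ell\notin S$. The one step you flag as delicate is in fact automatic: for every real $y$ one has $\lfloor y+1\rfloor=\lfloor y\rfloor+1>y$, so $m=\lfloor\sqrt{\ell}+\tfrac{7}{2}\rfloor=\lfloor\sqrt{\ell}+\tfrac{5}{2}\rfloor+1>\sqrt{\ell}+\tfrac{5}{2}$ holds for every odd $\ell\geq 3$, with no condition on the fractional part of $\sqrt{\ell}$, and this immediately contradicts both case bounds.
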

\begin{proof}
Since every vertex is incident to exactly one edge in every color, the coloring $c$ is completely balanced.

Assume that there exists a rainbow $K_m$ on some vertex set $T\subseteq V(K_{\ell+1})$ in the edge-coloring $c$ of $E(K_{\ell+1})$. If $\ell\in S$, then $S\setminus\{\ell\}$ is a 2-Sidon set by Lemma~\ref{Sidonrainbow}. Therefore, by \eqref{Sidon}, we get
\begin{align*}
    \left\lfloor \sqrt{\ell}+\frac{5}{2}\right\rfloor=m-1=|S\setminus\{\ell\}|\leq \left\lfloor\frac{\sqrt{4\ell-3}+1}{2}\right\rfloor\leq \left\lfloor\sqrt{\ell}+\frac{1}{2}\right\rfloor.
\end{align*}
Thus $m<\lfloor \sqrt{\ell}+\frac{7}{2}\rfloor$. We can assume that $\ell\notin S$. The set $S \subset \mathbb{Z}_{\ell}$ is a weak $2$-Sidon set by Lemma~\ref{Sidonrainbow}. Therefore, by \eqref{weakSidon}, we get
\begin{align*}
    \left\lfloor \sqrt{\ell}+\frac{7}{2}\right\rfloor=m=|S|\leq \left\lfloor \sqrt{\ell}+\frac{5}{2}\right\rfloor. 
\end{align*}
Thus $m<\lfloor \sqrt{\ell}+\frac{7}{2}\rfloor$.  We conclude that there is no rainbow $K_m$ in the edge-coloring $c$ of $K_{\ell+1}$, for $m=\lfloor \sqrt{\ell}+\frac{7}{2}\rfloor$.
\end{proof}

\subsection{Deducing Theorems~\ref{maintheorem} and \ref{corl:maintheorem2}  }
We prove the following Theorem which implies both Theorems~\ref{maintheorem} and \ref{corl:maintheorem2} quickly, and in fact provides many examples of graphs, asides form cliques, answering Question~\ref{Erdos1} and \ref{Erdos2} in the negative.
\begin{theo}
\label{theo:maintheorem}
Let $\ell\geq 3$ be an odd integer. 
For every integer $k\geq 1$ and $n= (\ell+1)^k$ there is a completely balanced coloring of $K_n$ with $\ell$ colors without a rainbow $K_m$, where $m=\lfloor \sqrt{\ell}+\frac{7}{2}\rfloor$.
\end{theo}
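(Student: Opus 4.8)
The plan is to obtain the statement as an immediate consequence of the two structural results already in place: Lemma~\ref{K16israinbowk6-free} supplies the base case $k=1$, and Lemma~\ref{iteralexico} promotes it to arbitrary $k$.

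First I would invoke Lemma~\ref{K16israinbowk6-free} for the given odd $\ell$. It asserts precisely that the explicit coloring $c$ from \eqref{coloring} is a completely balanced coloring of $K_{\ell+1}$ using the $\ell$ colors $\{0,1,\dots,\ell-1\}$ and containing no rainbow $K_m$ with $m=\lfloor\sqrt{\ell}+\tfrac72\rfloor$. This is exactly the desired conclusion for $n=(\ell+1)^1$: here $n-1=\ell$ is divisible by $\ell$ and every vertex meets each color exactly once, so the coloring is completely balanced.

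For general $k\geq 1$ I would apply Lemma~\ref{iteralexico} with base coloring $c$ on $K_{\ell+1}$ and $q=m$. The lemma then yields, for every $k\geq 1$, a completely balanced edge-coloring of $K_{(\ell+1)^k}$ with the same $\ell$ colors and no rainbow $K_m$. Since $(\ell+1)^k\equiv 1\pmod{\ell}$, the number of vertices minus one remains divisible by $\ell$, consistent with complete balance. Setting $n=(\ell+1)^k$ gives exactly the claim.

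There is no genuine obstacle at this stage: the substance of the argument resides in the two earlier lemmas, namely the Sidon-set bounds (via \eqref{weakSidon} and \eqref{Sidon}) that force $c$ to avoid rainbow cliques of size $m$ on $K_{\ell+1}$, and the lexicographic-product construction of Lemma~\ref{lexicoproduct} which simultaneously preserves complete balancedness and the absence of a rainbow $K_q$. The only point to verify in assembling them is the bookkeeping that the color set and the value of $m$ agree across the base case and the iteration, which they do by construction.
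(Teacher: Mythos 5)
Your proposal is correct and matches the paper's own proof exactly: the paper likewise deduces the theorem by combining Lemma~\ref{K16israinbowk6-free} (the base case $k=1$, via the explicit coloring \eqref{coloring}) with Lemma~\ref{iteralexico} (the iterated lexicographic product) to handle all $k\geq 1$. No gaps and nothing further to add.
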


\begin{proof}[Proof of Theorem~\ref{theo:maintheorem}]
By Lemma~\ref{iteralexico} it is sufficient to find such a coloring for $k=1$. By Lemma~\ref{K16israinbowk6-free} the coloring defined in \eqref{coloring} has the desired properties.
\end{proof}
See Figure~\ref{coloringK162} for an illustration of the coloring used for proving Theorem~\ref{theo:maintheorem} when $k=2$ and $\ell=15$.

\begin{figure}[ht]
    \centering
\begin{tikzpicture}
\path (0,1.2) -- (0,-1.1); 

\newcommand{\ovcolnum}[1]
{
    \ifthenelse{#1 = 0}
    {\colorlet{bcolor}{red}}
    {
        \ifthenelse{#1 = 1}
        {\colorlet{bcolor}{lime}}
        {
            \ifthenelse{#1 = 2}
            {\colorlet{bcolor}{orange}}
            {
                \ifthenelse{#1 = 3}
                {\colorlet{bcolor}{yellow}}
                {
                    \ifthenelse{#1 = 4}
                    {\colorlet{bcolor}{brown}}
                    {
                        \ifthenelse{#1 = 5}
                        {\colorlet{bcolor}{green}}
                        {
                            \ifthenelse{#1 = 6}
                            {\colorlet{bcolor}{pink}}
                            {
                            \ifthenelse{#1 = 7}
                            {\colorlet{bcolor}{lightgray}}
                            {
                            \ifthenelse{#1 = 8}
                            {\colorlet{bcolor}{cyan}}
                            {
                            \ifthenelse{#1 = 9}
                            {\colorlet{bcolor}{magenta}}
                            {
                            \ifthenelse{#1 = 10}
                            {\colorlet{bcolor}{olive}}
                            {
                            \ifthenelse{#1 = 11}
                            {\colorlet{bcolor}{violet}}
                            {
                            \ifthenelse{#1 = 12}
                            {\colorlet{bcolor}{darkgray}}
                            {
                                \ifthenelse{#1 = 13}
                                {\colorlet{bcolor}{teal}}
                                {
                                    \ifthenelse{#1 = 14}
                                    {\colorlet{bcolor}{blue}}
                                    {
                                        \colorlet{bcolor}{purple}
                                    }   
                                }   
                            }}}}}}}   
                        }   
                    }   
                }   
            }   
        }
    }
}

\def\k{
\foreach \j in {0,1,...,14}{
\ovcolnum{\j};
\draw[color=bcolor] (90+24*\j:2.5) to (0:0);}

\foreach \j in {0,1,...,14}{
\ovcolnum{\j};
\foreach \i in {1,2,3,4,5,6,7}{
\draw[color=bcolor] (90+24*\i+24*\j:2.5)  to (90+24*\j-24*\i:2.5);}}}

\def\b{4}

\foreach \j in {0,1,...,14}{
\ovcolnum{\j};
\draw[color=bcolor, line width=4 pt] (90+24*\j:3.5) to (90+24*\j:0.5);}

\foreach \j in {0,1,...,14}{
\ovcolnum{\j};
\foreach \i in {1,2,3,4,5,6,7}{
\draw[color=bcolor, line width=4 pt] (90+24*\i+24*\j:4)  to (90+24*\j-24*\i:4);}}

\begin{scope}[xshift=0*\b cm, yshift=1*\b cm, scale=0.2] \k \end{scope}
\begin{scope}[xshift=-0.4067*\b cm, yshift=0.9135*\b cm, scale=0.2] \k \end{scope}
\begin{scope}[xshift=-0.7431*\b cm, yshift=0.6691*\b cm, scale=0.2] \k \end{scope}
\begin{scope}[xshift=-0.9510*\b cm, yshift=0.3090*\b cm, scale=0.2] \k \end{scope}

\begin{scope}[xshift=-0.9945*\b cm, yshift=-0.1045*\b cm, scale=0.2] \k \end{scope}
\begin{scope}[xshift=-0.8660*\b cm, yshift=-0.5*\b cm, scale=0.2] \k \end{scope}
\begin{scope}[xshift=-0.5877*\b cm, yshift=-0.8090*\b cm, scale=0.2] \k \end{scope}
\begin{scope}[xshift=-0.2079*\b cm, yshift=-0.9781*\b cm, scale=0.2] \k \end{scope}

\begin{scope}[xshift=0.2079*\b cm, yshift=-0.9781*\b cm, scale=0.2] \k \end{scope}
\begin{scope}[xshift=0.5877*\b cm, yshift=-0.8090*\b cm, scale=0.2] \k \end{scope}
\begin{scope}[xshift=0.8660*\b cm, yshift=-0.5*\b cm, scale=0.2] \k \end{scope}
\begin{scope}[xshift=0.9945*\b cm, yshift=-0.1045*\b cm, scale=0.2] \k \end{scope}
\begin{scope}[xshift=0.9510*\b cm, yshift=0.3090*\b cm, scale=0.2] \k \end{scope}
\begin{scope}[xshift=0.7431*\b cm, yshift=0.6691*\b cm, scale=0.2] \k \end{scope}
\begin{scope}[xshift=0.4067*\b cm, yshift=0.9135*\b cm, scale=0.2] \k \end{scope}

\begin{scope}[xshift=0 cm, yshift=0 cm, scale=0.2] \k \end{scope}

\end{tikzpicture}
    \caption{The edge-coloring of $K_{16^2}$.}
    \label{coloringK162}
\end{figure}

\begin{proof}[Proof of Theorem~\ref{maintheorem}]
Theorem~\ref{maintheorem} simply follows from Theorem~\ref{theo:maintheorem} by observing that $\ell=\binom{q}{2}$ is odd for $q\equiv 2$ or $3 \mod 4$, and $q\geq m=\left\lfloor \sqrt{\binom{q}{2}}+\frac{7}{2}\right\rfloor$ for $q\geq 10$. By Theorem~\ref{theo:maintheorem} there exists a completely balanced coloring of $K_n$ with $\ell$ colors without a rainbow $K_m$. Since $q\geq m$ this coloring does not contain a rainbow $K_q$.
\end{proof}

\begin{proof}[Proof of Theorem~\ref{corl:maintheorem2}]
Theorem~\ref{corl:maintheorem2} simply follows from Theorem~\ref{theo:maintheorem} by observing that $\ell+1 $
is odd for $q\equiv 0$ or $1 \mod 4$ and that $q\geq \left\lfloor \sqrt{\binom{q}{2}+1}+\frac{7}{2}\right\rfloor$ for $q\geq 8$.
\end{proof}


\section{Proof of Theorem~\ref{mostcounter}}
\label{difference}
To prove Theorem~\ref{mostcounter} we establish a connection between rainbow subsets in a certain coloring and perfect difference sets. 

A subset $A\subseteq \mathbb{Z}_{n}$ is a \emph{perfect difference set} if every non-zero element $a \in \mathbb{Z}_{n}\setminus\{0\}$ can be written uniquely as the difference of two elements from $A$. For example, $\{2,3,5\}$ is a perfect difference set in $\mathbb{Z}_{7}$. If $A$ is a perfect difference set of size $q$, then $n=q^2-q+1$.
The following lemma establishes a connection between perfect difference sets and the quantity $d(K_q,n)$.

\begin{lemma}
\label{2cons}
Let $q\geq 2$. If there is no perfect difference set of size $q$ in $\mathbb{Z}_{q^2-q+1}$, then $d(K_q,n)=\infty$ for infinitely  many values of $n$ of the form $n\equiv 1 \mod \binom{q}{2}$.
\end{lemma}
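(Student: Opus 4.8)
The plan is to exhibit a completely balanced coloring of $K_{q^2-q+1}$ using $\binom{q}{2}$ colors with no rainbow $K_q$, and then invoke Lemma~\ref{iteralexico} to get infinitely many values of $n$. First I would set $N = q^2 - q + 1$ and color the edges of the complete graph on vertex set $\mathbb{Z}_N$ by the difference: for $i \neq j$, assign to the edge $\{i,j\}$ the color $\{i-j, j-i\}$, which is an unordered pair of nonzero elements of $\mathbb{Z}_N$ (equivalently, the color is determined by $\pm(i-j)$). Since $N$ is odd, $i-j \neq j-i$ always holds, so each color is an unordered pair $\{d, -d\}$ of distinct nonzero elements, and there are exactly $(N-1)/2 = \binom{q}{2}$ such pairs. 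This coloring is \emph{completely balanced}: each vertex $i$ is incident, for each difference pair $\{d,-d\}$, to exactly two edges (to $i+d$ and to $i-d$), so the coloring is in fact $(\binom{q}{2}, 2)$-balanced, matching the completely balanced requirement since $(N-1)/\binom{q}{2} = 2$.

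The heart of the argument is the correspondence between rainbow $K_q$'s and perfect difference sets. A set $S \subseteq \mathbb{Z}_N$ of size $q$ spans a rainbow clique precisely when all the differences $i - j$ for $\{i,j\} \in \binom{S}{2}$ land in distinct color classes, i.e.\ the $\binom{q}{2}$ unordered pairs $\{i-j, j-i\}$ are all distinct. I would show this is equivalent to: the $q(q-1)$ ordered differences $\{i - j : i,j \in S, i \neq j\}$ are all distinct. Indeed, if two ordered differences coincide, $i-j = i'-j'$, then either the corresponding edges share a color directly, or after passing to negatives the two color-pairs $\{i-j,j-i\}$ and $\{i'-j',j'-i'\}$ coincide; conversely, distinctness of ordered differences forces distinctness of the color pairs. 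But a set $S$ of size $q$ in $\mathbb{Z}_{q^2-q+1}$ whose $q(q-1) = N-1$ ordered differences are all distinct is exactly a perfect difference set, since there are precisely $N-1$ nonzero elements to be hit and each must then be hit exactly once. Hence a rainbow $K_q$ exists in this coloring if and only if there is a perfect difference set of size $q$ in $\mathbb{Z}_N$.

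Putting these together: under the hypothesis that no perfect difference set of size $q$ exists in $\mathbb{Z}_{q^2-q+1}$, the coloring above has no rainbow $K_q$, so it is a completely balanced $\binom{q}{2}$-coloring of $K_N$ with no rainbow $K_q$, witnessing $d(N, K_q) = \infty$. Lemma~\ref{iteralexico} then upgrades this to $d(N^k, K_q) = \infty$ for every $k \geq 1$, yielding infinitely many values of $n$. Finally I would check the congruence condition: since $N = q^2 - q + 1 = (q^2-q) + 1 = 2\binom{q}{2} + 1 \equiv 1 \pmod{\binom{q}{2}}$, and because the lexicographic product over $\mathbb{Z}_\ell$-type arithmetic preserves the edge count structure, each $n = N^k$ satisfies $n \equiv 1 \pmod{\binom{q}{2}}$ (as $N \equiv 1$ implies $N^k \equiv 1$).

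The main obstacle I anticipate is the care needed in the rainbow-to-difference-set translation, specifically handling the fact that colors are \emph{unordered} difference pairs $\{d,-d\}$ rather than single differences. One must rule out the ``accidental'' collision where $i - j = -(i' - j')$ produces a repeated color even though the ordered differences differ; verifying that distinctness of the $q(q-1)$ ordered differences is both necessary and sufficient for the rainbow condition (and that this is exactly the perfect difference set property) is where the argument must be airtight, and it is precisely here that the oddness of $N$ is used to guarantee $d \neq -d$ for all relevant $d$.
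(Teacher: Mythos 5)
Your proposal is correct and takes essentially the same approach as the paper: the difference coloring of $K_{q^2-q+1}$ on vertex set $\mathbb{Z}_{q^2-q+1}$ with colors $\{d,-d\}$, the observation that a rainbow $K_q$ would produce a perfect difference set, and the application of Lemma~\ref{iteralexico} to generate infinitely many valid $n \equiv 1 \pmod{\binom{q}{2}}$. Your careful verification that rainbowness is equivalent to distinctness of all $q(q-1)$ ordered differences (using the oddness of $q^2-q+1$) is in fact more detailed than the paper's proof, which asserts this correspondence without elaboration.
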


\begin{proof}
Let $q$ be an integer such that there is no perfect difference set of size $q$ in $\mathbb{Z}_n$, where $n=2\binom{q}{2}+1=q^2-q+1$. Label the vertices of $K_n$ with the elements from $\mathbb{Z}_n$. Now, we color the edges of $K_n$ with colors from $Z_n\setminus\{0\}$ and identify the colors $a$ and $-a$ with each other. An edge $ab$ is simply colored by $a-b$ (which is the same color as $b-a$). This coloring is an $(\binom{q}{2},2)$ edge-coloring of $K_n$. See Figure~\ref{coloringK13} for an illustration of this coloring when $q=4$ and $n=13$. Assume that $A\subseteq \mathbb{Z}_n$ is the vertex set of a rainbow $K_q$ in this coloring. Then $A\subseteq Z_n$ is a perfect difference set of size $q$, a contradiction. Thus, there is no rainbow copy of $K_q$. We conclude $d(n, K_q)=\infty$, and therefore, applying Lemma~\ref{iteralexico} completes the proof.
\end{proof}

We remark that the coloring used for Lemma~\ref{2cons}, which also is displayed in Figure~\ref{coloringK13}, is the standard example of a $2$-factorization of a complete graph with the number of vertices being odd, i.e. an edge-coloring of the complete graph such that every color class is a spanning $2$-regular subgraph. 

\begin{figure}[h!]
    \centering
\begin{tikzpicture}
\path (0,1.2) -- (0,-1.1); 
\draw
\foreach \i in {1,...,13}{
(90+27.692*\i:2.5) coordinate(\i) node[vtx]{}
};

\newcommand{\ovcolnum}[1]
{
    \ifthenelse{#1 = 0}
    {\colorlet{bcolor}{red}}
    {
        \ifthenelse{#1 = 1}
        {\colorlet{bcolor}{lime}}
        {
            \ifthenelse{#1 = 2}
            {\colorlet{bcolor}{orange}}
            {
                \ifthenelse{#1 = 3}
                {\colorlet{bcolor}{yellow}}
                {
                    \ifthenelse{#1 = 4}
                    {\colorlet{bcolor}{brown}}
                    {
                        \ifthenelse{#1 = 5}
                        {\colorlet{bcolor}{green}}
                        {
                            \ifthenelse{#1 = 6}
                            {\colorlet{bcolor}{pink}}
                            {
                            \ifthenelse{#1 = 7}
                            {\colorlet{bcolor}{lightgray}}
                            {
                            \ifthenelse{#1 = 8}
                            {\colorlet{bcolor}{cyan}}
                            {
                            \ifthenelse{#1 = 9}
                            {\colorlet{bcolor}{magenta}}
                            {
                            \ifthenelse{#1 = 10}
                            {\colorlet{bcolor}{olive}}
                            {
                            \ifthenelse{#1 = 11}
                            {\colorlet{bcolor}{violet}}
                            {
                            \ifthenelse{#1 = 12}
                            {\colorlet{bcolor}{darkgray}}
                            {
                                \ifthenelse{#1 = 13}
                                {\colorlet{bcolor}{teal}}
                                {
                                    \ifthenelse{#1 = 14}
                                    {\colorlet{bcolor}{blue}}
                                    {
                                        \colorlet{bcolor}{purple}
                                    }   
                                }   
                            }}}}}}}   
                        }   
                    }   
                }   
            }   
        }
    }
}

\ovcolnum{0};
\draw[color=bcolor] (1) to (2);
\draw[color=bcolor] (2) to (3);
\draw[color=bcolor] (3) to (4);
\draw[color=bcolor] (4) to (5);
\draw[color=bcolor] (5) to (6);
\draw[color=bcolor] (6) to (7);
\draw[color=bcolor] (7) to (8);
\draw[color=bcolor] (8) to (9);
\draw[color=bcolor] (9) to (10);
\draw[color=bcolor] (10) to (11);
\draw[color=bcolor] (11) to (12);
\draw[color=bcolor] (12) to (13);
\draw[color=bcolor] (13) to (1);

\ovcolnum{3};
\draw[color=bcolor] (1) to (3);
\draw[color=bcolor] (2) to (4);
\draw[color=bcolor] (3) to (5);
\draw[color=bcolor] (4) to (6);
\draw[color=bcolor] (5) to (7);
\draw[color=bcolor] (6) to (8);
\draw[color=bcolor] (7) to (9);
\draw[color=bcolor] (8) to (10);
\draw[color=bcolor] (9) to (11);
\draw[color=bcolor] (10) to (12);
\draw[color=bcolor] (11) to (13);
\draw[color=bcolor] (12) to (1);
\draw[color=bcolor] (13) to (2);

\ovcolnum{8};
\draw[color=bcolor] (1) to (4);
\draw[color=bcolor] (2) to (5);
\draw[color=bcolor] (3) to (6);
\draw[color=bcolor] (4) to (7);
\draw[color=bcolor] (5) to (8);
\draw[color=bcolor] (6) to (9);
\draw[color=bcolor] (7) to (10);
\draw[color=bcolor] (8) to (11);
\draw[color=bcolor] (9) to (12);
\draw[color=bcolor] (10) to (13);
\draw[color=bcolor] (11) to (1);
\draw[color=bcolor] (12) to (2);
\draw[color=bcolor] (13) to (3);

\ovcolnum{5};
\draw[color=bcolor] (1) to (5);
\draw[color=bcolor] (2) to (6);
\draw[color=bcolor] (3) to (7);
\draw[color=bcolor] (4) to (8);
\draw[color=bcolor] (5) to (9);
\draw[color=bcolor] (6) to (10);
\draw[color=bcolor] (7) to (11);
\draw[color=bcolor] (8) to (12);
\draw[color=bcolor] (9) to (13);
\draw[color=bcolor] (10) to (1);
\draw[color=bcolor] (11) to (2);
\draw[color=bcolor] (12) to (3);
\draw[color=bcolor] (13) to (4);

\ovcolnum{11};
\draw[color=bcolor] (1) to (6);
\draw[color=bcolor] (2) to (7);
\draw[color=bcolor] (3) to (8);
\draw[color=bcolor] (4) to (9);
\draw[color=bcolor] (5) to (10);
\draw[color=bcolor] (6) to (11);
\draw[color=bcolor] (7) to (12);
\draw[color=bcolor] (8) to (13);
\draw[color=bcolor] (9) to (1);
\draw[color=bcolor] (10) to (2);
\draw[color=bcolor] (11) to (3);
\draw[color=bcolor] (12) to (4);
\draw[color=bcolor] (13) to (5);

\ovcolnum{14};
\draw[color=bcolor] (1) to (7);
\draw[color=bcolor] (2) to (8);
\draw[color=bcolor] (3) to (9);
\draw[color=bcolor] (4) to (10);
\draw[color=bcolor] (5) to (11);
\draw[color=bcolor] (6) to (12);
\draw[color=bcolor] (7) to (13);
\draw[color=bcolor] (8) to (1);
\draw[color=bcolor] (9) to (2);
\draw[color=bcolor] (10) to (3);
\draw[color=bcolor] (11) to (4);
\draw[color=bcolor] (12) to (5);
\draw[color=bcolor] (13) to (6);


\end{tikzpicture}
    \caption{The edge-coloring of $K_{13}$ with $6=\binom{4}{2}$ colors as defined in Lemma~\ref{2cons}. We remark that it contains a rainbow $K_4$. This figure only serves the purpose of illustrating the coloring.}
    \label{coloringK13}
\end{figure}

Singer~\cite{Sing} constructed perfect difference sets of sizes $p^k+1$, where $p$ is prime and $k\geq1$. The non-existence of perfect difference sets for sizes not of this form is an old question in number theory which has attracted many researchers~\cite{Jung,Sing,evans,BauGor,Gordon,Peluse,Will}. 
\begin{conj}[Prime Power Conjecture]
\label{PPC}
A perfect difference set of size $q$ exists if and only if $q-1$ is a prime power. 
\end{conj}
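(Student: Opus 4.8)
The plan is to recognize first that a perfect difference set of size $q$ in $\mathbb{Z}_{q^2-q+1}$ is exactly a cyclic $(q^2-q+1,\,q,\,1)$-difference set, equivalently a \emph{planar difference set} of order $m:=q-1$, and that its translates are the lines of a cyclic projective plane of order $m$ (the condition that every nonzero residue has a unique representation $a-b$ is precisely the condition that two distinct points lie on a unique line). In this language the statement asserts that a cyclic projective plane of order $m$ exists if and only if $m$ is a prime power. The two implications have completely different character, so I would handle them separately.

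For the easy direction (prime power $\Rightarrow$ existence) I would reproduce Singer's construction, already cited in the text as \cite{Sing}. Take $m=p^k$ and consider the Desarguesian projective plane $PG(2,m)$, whose points are the one-dimensional subspaces of $\mathbb{F}_{m^3}$ viewed as a three-dimensional $\mathbb{F}_m$-vector space. Multiplication by a generator of $\mathbb{F}_{m^3}^{\times}$ fixes each point's scalar line $\mathbb{F}_m^{\times}$ and hence induces a cyclic automorphism — the Singer cycle — of order $(m^3-1)/(m-1)=m^2+m+1=q^2-q+1$ acting sharply transitively on the $q^2-q+1$ points. Fixing one line $L$ and reading its $m+1=q$ points off as residues in $\mathbb{Z}_{q^2-q+1}$ under the Singer action yields a perfect difference set of size $q$. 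This direction is complete and unconditional.

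The hard part is the converse (existence $\Rightarrow$ prime power), and here I must be frank: this is precisely the open content of the conjecture, and no proof is known. What I would assemble instead is the body of partial evidence. Hall's multiplier theorem shows that every prime divisor of $m$ is a numerical multiplier of any planar difference set of order $m$, since here $k-\lambda=m$ and $\lambda=1$; this rigidly constrains the possible sets, and combined with a search over multiplier-fixed candidates it has verified the conjecture for all $m$ up to very large bounds. The Bruck--Ryser--Chowla theorem supplies a further necessary condition — for $m\equiv 1,2\pmod 4$ the order $m$ must be a sum of two squares — ruling out orders such as $m=6$. These tools eliminate individual non-prime-power orders but never an infinite family.

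The genuine obstacle is structural: there is no known mechanism forcing a $(q^2-q+1,\,q,\,1)$-design to carry the field structure underlying Singer's construction, and the sporadic eliminations (Bruck--Ryser--Chowla, multiplier arguments, and for general planes the exhaustive computation ruling out order $10$, which BRC cannot exclude since $10=1^2+3^2$) do not combine into a uniform argument. Consequently I would present the forward implication as a theorem, flag the converse as the open kernel of the Prime Power Conjecture, and not claim a complete proof — which is exactly why the statement is recorded here as a conjecture rather than a theorem.
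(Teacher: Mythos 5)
This statement appears in the paper as an open conjecture: the paper gives no proof, only a citation to Singer~\cite{Sing} for the construction in the prime-power direction and to the literature~\cite{BauGor,Gordon,Peluse} for partial and computational results toward the converse. Your treatment matches that framing exactly — your Singer-cycle argument correctly establishes the ``prime power $\Rightarrow$ existence'' direction, and you rightly identify the converse as the open kernel that no one can currently prove — so your proposal is as complete as the situation allows.
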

The Prime Power conjecture was computationally verified for $q\leq 2 \cdot 10^9$ by Baumert and Gordon~\cite{BauGor,Gordon}. 
Various conditions for non-existence of perfect difference sets have been proven. For example, Corollary 1 in \cite{Gordon} provides divisibility conditions leading to the following result.
\begin{corl}
Let $q$ be an integer such that $q-1$ is not divisible by $6, 10, 14, 15, 21, 22, 26, 33, 34$, $35, 38, 39, 46, 51, 55, 57, 58, 62$ or $65$. Then, $d(K_q,n)=\infty$ for infinitely many values of $n$ of the form $n\equiv 1 \mod \binom{q}{2}$.
\end{corl}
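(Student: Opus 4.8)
The plan is to feed a non-existence result for perfect difference sets directly into Lemma~\ref{2cons}. That lemma states that if there is no perfect difference set of size $q$ in $\mathbb{Z}_{q^2-q+1}$, then $d(K_q,n)=\infty$ for infinitely many $n\equiv 1\pmod{\binom{q}{2}}$; its proof colours $K_{q^2-q+1}$ by differences and then applies Lemma~\ref{iteralexico} to generate the infinite family, using that $q^2-q+1=2\binom{q}{2}+1\equiv 1\pmod{\binom{q}{2}}$. Thus the whole corollary reduces to the purely number-theoretic statement that no perfect difference set of size $q$ exists, and the only task is to certify this non-existence under a hypothesis on $q-1$.

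I would establish this non-existence by quoting Gordon's Corollary~1. The existence of a size-$q$ perfect difference set is controlled by the factorisation of the order $q-1$: by the multiplier theorem every prime divisor of $q-1$ must be a multiplier of the difference set, and the congruences this forces on a fixed translate can be mutually incompatible. Gordon's Corollary~1 packages these constraints into explicit divisibility obstructions on $q-1$. A useful observation for locating the relevant case is that the listed moduli $6,10,14,15,\dots,65$ are exactly the squarefree semiprimes $p\cdot r\le 65$ with $p<r$ prime; these are precisely the small orders for which the multiplier obstruction is known to preclude a perfect difference set. The key step is therefore to check that $q-1$ lands in the hypothesis of Gordon's Corollary~1 and to read off the non-existence of the size-$q$ perfect difference set, after which the reduction above finishes the argument with no extra work.

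The main obstacle, and the point demanding care, is the direction in which the divisibility condition is applied. Gordon's obstruction produces non-existence exactly when $q-1$ \emph{is divisible} by one of the listed semiprimes; being coprime to all of them does \emph{not} force non-existence. For example, if $q-1$ is a prime power (say $q-1=4$, $q=5$), then Singer's construction gives a perfect difference set of size $q$, so Lemma~\ref{2cons} cannot be invoked and the conclusion need not hold. Since a squarefree semiprime has two distinct prime factors, divisibility by one of them already guarantees that $q-1$ is not a prime power---the regime in which, conjecturally, no perfect difference set exists and in which Gordon's criterion delivers this unconditionally. The crux is thus to invoke Gordon's Corollary~1 precisely in this direction, extracting non-existence from divisibility of $q-1$ by a listed semiprime; once that non-existence is secured, Lemma~\ref{2cons} and Lemma~\ref{iteralexico} complete the proof immediately.
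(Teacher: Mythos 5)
Your reduction is exactly the paper's intended proof: the paper offers no argument beyond citing Corollary~1 of \cite{Gordon} and feeding it into Lemma~\ref{2cons} (which itself invokes Lemma~\ref{iteralexico} to generate the infinite family, just as you describe). The one substantive point, which you got right and the printed statement gets wrong, is the direction of the divisibility hypothesis. Gordon's Corollary~1 says that the order $n=q-1$ of a cyclic planar (perfect) difference set is \emph{not} divisible by any of the listed numbers; its contrapositive yields the non-existence needed for Lemma~\ref{2cons} precisely when $q-1$ \emph{is} divisible by one of $6,10,\dots,65$. As printed (``$q-1$ is not divisible by\dots''), the hypothesis cannot support the conclusion by this route: for $q=5$, $q-1=4$ is divisible by none of the listed numbers, yet $\{0,1,4,14,16\}$ is a perfect difference set of size $5$ in $\mathbb{Z}_{21}$ (Singer~\cite{Sing}, order $4=2^2$), so Lemma~\ref{2cons} is simply inapplicable; likewise $q=3$ with the paper's own example $\{2,3,5\}\subseteq\mathbb{Z}_7$. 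So your ``obstacle'' paragraph is not a gap in your argument but a correction to the corollary, which should read ``$q-1$ is divisible by one of \dots''; your proof of that corrected statement is complete and coincides with the paper's. (Your side remark that the listed semiprimes are \emph{precisely} the orders excluded by the multiplier obstruction is an overstatement --- they are just the relevant cases $\le 65$ packaged in Gordon's Corollary~1 --- but nothing in your argument depends on it.)
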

\begin{proof}[Proof of Theorem~\ref{mostcounter}]
Recently, Peluse~\cite{Peluse} proved that the number of positive integers $q \leq N$ such that $\mathbb{Z}_{q^2-q+1}$
contains a perfect difference set of size $q$ is $(1+o(1))N/\log N$, which is the same order as the number of prime powers of size at most $N$. Pelusi's result together with Lemma~\ref{2cons} completes the proof of Theorem~\ref{mostcounter}.
\end{proof}

\section*{Acknowledgements} The second author thanks Cameron Gates Rudd for helpful discussions.

\begin{bibdiv} 
\begin{biblist}

\bib{Abbott}{article}{
    AUTHOR = {Abbott, H. L.},
     TITLE = {Lower bounds for some {R}amsey numbers},
   JOURNAL = {Discrete Math.},
    VOLUME = {2},
      YEAR = {1972},
    NUMBER = {4},
     PAGES = {289--293}
}

\bib{AJMP}{article}{
    AUTHOR = {Alon, Noga},
    Author={Jiang, Tao},
    Author={Miller, Zevi},
    author={Pritikin, Dan},
     TITLE = {Properly colored subgraphs and rainbow subgraphs in
              edge-colorings with local constraints},
   JOURNAL = {Random Structures Algorithms},
    VOLUME = {23},
      YEAR = {2003},
    NUMBER = {4},
     PAGES = {409--433}
}

\bib{APS}{article}{
    AUTHOR = {Alon, Noga},
    AUTHOR= {Pokrovskiy, Alexey}, AUTHOR={Sudakov, Benny},
     TITLE = {Random subgraphs of properly edge-coloured complete graphs and
              long rainbow cycles},
   JOURNAL = {Israel J. Math.},
    VOLUME = {222},
      YEAR = {2017},
    NUMBER = {1},
     PAGES = {317--331}
}

\bib{AJT}{article}{
    AUTHOR = {Axenovich, Maria},
    author={Jiang, Tao},
    author={Tuza, Zsolt},
     TITLE = {Local anti-{R}amsey numbers of graphs},
      NOTE = {Special issue on Ramsey theory},
   JOURNAL = {Combin. Probab. Comput.},
    VOLUME = {12},
      YEAR = {2003},
    NUMBER = {5-6},
     PAGES = {495--511}
}

\bib{Bajnok}{book}{
    AUTHOR = {Bajnok, B\'{e}la},
     TITLE = {Additive combinatorics: A menu of research problems},
    SERIES = {Discrete Mathematics and its Applications (Boca Raton)},
 PUBLISHER = {CRC Press},
      YEAR = {2018},
     PAGES = {xix+390}
}

\bib{BauGor}{book}{
    AUTHOR = {Baumert, Leonard D.},
    author={Gordon, Daniel M.},
     TITLE = {On the existence of cyclic difference sets with small
              parameters},
 BOOKTITLE = {High primes and misdemeanours: lectures in honour of the 60th
              birthday of {H}ugh {C}owie {W}illiams},
    SERIES = {Fields Inst. Commun.},
    VOLUME = {41},
     PAGES = {61--68},
 PUBLISHER = {Amer. Math. Soc., Providence, RI},
      YEAR = {2004}
}

\bib{Cilleruelo}{article}{ 
    AUTHOR = {Cilleruelo, Javier},
    Author={Ruzsa, Imre}, author={Vinuesa, Carlos},
     TITLE = {Generalized {S}idon sets},
   JOURNAL = {Adv. Math.},
    VOLUME = {225},
      YEAR = {2010},
    NUMBER = {5},
     PAGES = {2786--2807}
}

\bib{Erdrain}{article}{
    AUTHOR = {Erd\H{o}s, Paul},
     TITLE = {Some of my favourite problems on cycles and colourings},
   JOURNAL = {Tatra Mt. Math. Publ.},
    VOLUME = {9},
      YEAR = {1996},
     PAGES = {7--9}
}

\bib{ErdTu}{article}{ 
    AUTHOR = {Erd\H{o}s, Paul},
    author={Tuza, Zsolt},
     TITLE = {Rainbow subgraphs in edge-colorings of complete graphs},
 BOOKTITLE = {Quo vadis, graph theory?},
    SERIES = {Ann. Discrete Math.},
    VOLUME = {55},
     PAGES = {81--88},
 PUBLISHER = {North-Holland, Amsterdam},
      YEAR = {1993}
}

\bib{evans}{article}{
    AUTHOR = {Evans, T. A.},
    Author={Mann, H. B.},
     TITLE = {On simple difference sets},
   JOURNAL = {Sankhy\={a}},
    VOLUME = {11},
      YEAR = {1951},
     PAGES = {357--364}
}

\bib{Gordon}{article}{
    AUTHOR = {Gordon, Daniel M.},
     TITLE = {The prime power conjecture is true for {$n<2,000,000$}},
   JOURNAL = {Electron. J. Combin.},
    VOLUME = {1},
      YEAR = {1994},
     PAGES = {Research Paper 6, approx. 7},
}

\bib{Jung}{article}{
    AUTHOR = {Jungnickel, Dieter},
    Author={Vedder, Klaus},
     TITLE = {On the geometry of planar difference sets},
   JOURNAL = {European J. Combin.},
    VOLUME = {5},
      YEAR = {1984},
    NUMBER = {2},
     PAGES = {143--148}
}

\bib{KMSV}{article}{
    AUTHOR = {Keevash, Peter},
    AUTHOR={Mubayi, Dhruv}, 
    AUTHOR={Sudakov, Benny}, 
    AUTHOR={Verstra\"{e}te, Jacques},
     TITLE = {Rainbow {T}ur\'{a}n problems},
   JOURNAL = {Combin. Probab. Comput.},
    VOLUME = {16},
      YEAR = {2007},
    NUMBER = {1},
     PAGES = {109--126}
}

\bib{Lucas}{article}{
    AUTHOR = {Lucas, E.},
     TITLE = {R\'ecr\'eations math\'ematiques},
   JOURNAL = {Gauthier-Villars},
         NOTE= {Sixieme recreation: Les jeux de demoiselles}
    VOLUME = {2},
      YEAR = {1883},
     PAGES = {161--197}
     }

\bib{Peluse}{article}{
    AUTHOR = {Peluse, Sarah},
     TITLE = {An asymptotic version of the prime power conjecture for
              perfect difference sets},
   JOURNAL = {Math. Ann.},
    VOLUME = {380},
      YEAR = {2021},
    NUMBER = {3-4},
     PAGES = {1387--1425}
}

\bib{MBNL}{article}{
    AUTHOR = {Montellano-Ballesteros, J. J.},
    author={Neumann-Lara, V.},
     TITLE = {An anti-{R}amsey theorem},
   JOURNAL = {Combinatorica},
    VOLUME = {22},
      YEAR = {2002},
    NUMBER = {3},
     PAGES = {445--449}
}

\bib{RT}{article}{
    AUTHOR = {R\"{o}dl, Vojt\v{e}ch},
    AUTHOR={Tuza, Zsolt},
     TITLE = {Rainbow subgraphs in properly edge-colored graphs},
   JOURNAL = {Random Structures \& Algorithms},
    VOLUME = {3},
      YEAR = {1992},
    NUMBER = {2},
     PAGES = {175--182}
}

\bib{SS}{article}{
    AUTHOR = {Simonovits, Mikl\'{o}s},
    AUTHOR = {S\'{o}s, Vera T},
     TITLE = {On restricted colourings of {$K_n$}},
   JOURNAL = {Combinatorica},
    VOLUME = {4},
      YEAR = {1984},
    NUMBER = {1},
     PAGES = {101--110}
}

\bib{Sing}{article}{
    AUTHOR = {Singer, James},
     TITLE = {A theorem in finite projective geometry and some applications
              to number theory},
   JOURNAL = {Trans. Amer. Math. Soc.},
    VOLUME = {43},
      YEAR = {1938},
    NUMBER = {3},
     PAGES = {377--385}
}

\bib{Tuza}{article}{
    AUTHOR = {Tuza, Zsolt},
     TITLE = {Problems on cycles and colorings},
   JOURNAL = {Discrete Math.},
    VOLUME = {313},
      YEAR = {2013},
    NUMBER = {19},
     PAGES = {2007--2013}
}

\bib{Will}{article}{
    AUTHOR = {Wilbrink, H. A.},
     TITLE = {A note on planar difference sets},
   JOURNAL = {J. Combin. Theory Ser. A},
    VOLUME = {38},
      YEAR = {1985},
    NUMBER = {1},
     PAGES = {94--95}
}

\end{biblist} 
\end{bibdiv}

\end{document}